\theoremstyle{plain}
\newtheorem{lem}{Lemma}[section]
\newtheorem{prop}[lem]{Proposition}
\newtheorem{thm}[lem]{Theorem}
\theoremstyle{definition}
\newtheorem{defn}[lem]{Definition}
\newtheorem{ex}[lem]{Example}
\newtheorem{question}[lem]{Question}
\newtheorem{disc}[lem]{Remark}
\newtheorem{notn}[lem]{Notation}
\newtheorem{fact}[lem]{Fact}
\newtheorem{assumption}[lem]{Assumption}
\theoremstyle{remark}
\newcommand{\Hom}{\operatorname{Hom}}
\newcommand{\s}{\mathfrak{S}}
\newcommand{\Cl}{\operatorname{Cl}}
\newcommand{\Pic}{\operatorname{Pic}}
\newcommand{\ideal}[1]{\mathfrak{#1}}
\newcommand{\m}{\ideal{m}}
\newcommand{\fm}{\ideal{m}}
\newcommand{\fa}{\ideal{a}}
\newcommand{\fb}{\ideal{b}}
\newcommand{\sfk}{\mathsf k}
\newcommand{\ol}{\overline}
\newcommand{\bbz}{\mathbb{Z}}
\newcommand{\vf}{\varphi}
\newcommand{\tri}{\trianglelefteq}
\renewcommand{\geq}{\geqslant}
\renewcommand{\leq}{\leqslant}
\newcommand{\Ext}[4][R]{\operatorname{Ext}_{#1}^{#2}(#3,#4)}
\newcommand{\Otimes}[3][R]{#2\otimes_{#1}#3}
\renewcommand{\Hom}[3][R]{\operatorname{Hom}_{#1}(#2,#3)}
\newcommand{\OAtimes}[3][A]{#2\otimes_{#1}#3}
\numberwithin{equation}{lem}
\newcommand{\ec}[1][C]{\ol{[#1]}}
\newcommand{\sbar}[1][R]{\ol{\s}_0(#1)}
\begin{document}

\bibliographystyle{amsplain}

\author{Sean K. Sather-Wagstaff}

\address{School of Mathematical and Statistical Sciences,
Clemson University,
O-110 Martin Hall, Box 340975, Clemson, S.C. 29634,
USA}

\email{ssather@clemson.edu}

\urladdr{https://ssather.people.clemson.edu/}

\author{Tony Se}

\address{Department of Mathematics,
West Virginia University,
320 Armstrong Hall,
P.O. Box 6310,
Morgantown, WV 26506-6310,
USA}

\email{tony.se@mail.wvu.edu}

\urladdr{http://community.wvu.edu/~tts00001/}

\author{Sandra Spiroff}

\address{Department of Mathematics,
University of Mississippi,
Hume Hall 335, P.O. Box 1848, University, MS 38677,
USA}

\email{spiroff@olemiss.edu}

\urladdr{http://math.olemiss.edu/sandra-spiroff/}

\thanks{
Sandra Spiroff was supported in part by Simons Foundation Collaboration Grant 584932.}

\title{Generic Constructions and Semidualizing Modules}

\date{\today}

%\dedicatory{}

\keywords{canonical module, ladder determinantal ring, divisor class group, numerical semigroup ring, semidualizing module}
\subjclass[2010]{13C20,13C40}

\begin{abstract} 
We investigate some
general machinery for describing semidualizing modules over generic constructions like ladder determinantal rings with
coefficients in a normal domain. We also pose and investigate natural localization questions that arise in the process.
\end{abstract}

\maketitle

%\tableofcontents

\section*{Introduction} \label{sec00}

Throughout this paper, 
all rings are commutative  with identity, and $\sfk$ is a field.

\

This paper investigates 
some general machinery for describing semidualizing modules over generic constructions.
Recall that a \emph{semidualizing module} over a noetherian ring $R$ is a finitely generated module $C$ such that
%the natural map $R \to \Hom CC$ is an isomorphism
%(equivalently, 
$\Hom CC\cong R$
% \cite[Proposition~2.2.2]{sather:sdm})
and $\Ext{\geq 1}CC=0$. Examples include the free $R$-module $R$ and a canonical module $\omega$
over a %complete
Cohen-Macaulay ring.
Introduced by Foxby~\cite{foxby:gmarm}, these modules allow, among other things, for investigation into 
homological properties of modules that, in the case of $R$ or $\omega$, recover duality results of
Auslander and Bridger~\cite{auslander:smt}
and Grothendieck (as documented by Hartshorne~\cite{hartshorne:lc}).
Other applications are described in~\cite{avramov:rhafgd,sather:cidfc,sather:bnsc}. 
Given the utility of these objects, we work toward the goal of describing all semidualizing modules over certain classes of rings. 

We focus on rings that are constructed generically,
that is, families of rings defined over $\bbz$, then broadened to coefficients in other rings. 
As an example of a generic construction, let $X$ be an $m\times n$ matrix of indeterminates,
let $t$ be a positive integer, and let $\bbz[X]$ be the polynomial ring over  $\bbz$ with indeterminates in $X$.
Then the ideal $I_t(X)$ generated by the 
size $t$ minors of $X$ is defined over $\bbz$, but is often studied over $\sfk$. 
In~\cite{sather:divisor}, the semidualizing modules over the ring $A_t(X)=A[X]/I_t(X)$ are completely determined when $A$ is a field and,
moreover, when $A$ is a  normal domain (i.e., a noetherian integrally closed integral domain) by 
using the divisor class group to combine the semidualizing modules over $A$ with those of $\sfk_t(X)$. 

%In analyzing the work in~\cite{sather:divisor}, we realized that the construction there is unnecessarily complicated.
The point of the current paper is to build a foundation for simplifying %this,
the constructions in~\cite{sather:divisor}, in a way that applies to the more general
ladder determinantal context.
(See the examples below and the results in~\cite{SWSeSpP4}
for details about these constructions and our computations of their semidualizing modules.)

The  main idea in our simplification is to use the fact that these rings are tensor products over a principal ideal domain $D$.
Accordingly, Sections~\ref{sec181225a} and~\ref{sec191006c} of the current paper describe how semidualizing modules over two $D$-algebras
give rise to semidualizing modules over the tensor product algebra. The first of these sections deals with this general setting, while the second  
assumes that the tensor product is normal. 

One aspect of our work in~\cite{SWSeSpP4} is to localize our tensor product strategically. Since the rings considered there are
standard graded normal domains, certain localization properties are automatically especially nice; see Fact~\ref{prop190312a} below.
This led us to consider similar localizations in non-normal standard graded rings, where we were somewhat surprised to learn that things are not so nice. 
This is the subject of Section~\ref{sec191006b}, which contains a few open questions
and the surprising Example~\ref{ex190329b}.

%%%%%%%%%%%%%%%%%%%%%%%%%%%%%%%%%%%%%%%%%%%%%%%%%%

\section{Background}\label{sec190813a}

%%%%%%%%%%%%%%%%%%%%%%%%%%%%%%%%%%%%%%%%%%%%%%%%%%

\begin{fact}\label{fact181224a}
Let $D$ be a ring, and let $R_1,R_2$ be $D$-algebras.
Set $T=R_1\otimes_DR_2$. 
%For $i=1,2$ let $M_i, N_i$ be $R_i$-modules.
For $i=1,2$, let $M_i$ be an $R_i$-module.
Then we have the following natural $T$-module isomorphisms, the first and third of which are tensor cancellation and associativity.
\begin{align*}
(M_1\otimes_{R_1}T)\otimes_T(T\otimes_{R_2}M_2)
&\cong M_1\otimes_{R_1} T\otimes_{R_2}M_2\\
&\cong M_1\otimes_{R_1}(R_1\otimes_DR_2)\otimes_{R_2}M_2\\
&\cong M_1\otimes_DM_2
\end{align*}
The second isomorphism is by the definition of $T$. 
\end{fact}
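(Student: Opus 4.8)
The plan is to verify the three displayed isomorphisms one at a time, checking in each case that the relevant natural map respects the $T$-module structure and not merely the underlying abelian group structure. Throughout, I would regard $T$ as an $(R_1,R_2)$-bimodule via the two structure maps $R_i\to T$, so that the threefold tensor product $M_1\otimes_{R_1}T\otimes_{R_2}M_2$ is unambiguous by associativity.

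For the first isomorphism, I would begin with $(M_1\otimes_{R_1}T)\otimes_T(T\otimes_{R_2}M_2)$ and apply associativity of tensor product to rewrite it as $M_1\otimes_{R_1}(T\otimes_T T)\otimes_{R_2}M_2$, then use the cancellation isomorphism $T\otimes_T T\cong T$ given by multiplication to land in $M_1\otimes_{R_1}T\otimes_{R_2}M_2$. The $T$-action on the source is the one coming from the middle factor $T$ in either tensorand, which is exactly the copy of $T$ surviving the cancellation, so the composite is $T$-linear.

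The second isomorphism is a matter of notation: substituting $T=R_1\otimes_DR_2$ into the middle factor turns $M_1\otimes_{R_1}T\otimes_{R_2}M_2$ into $M_1\otimes_{R_1}(R_1\otimes_DR_2)\otimes_{R_2}M_2$, where the $T$-module structure on the latter is induced by the actions of the left and right tensor factors of $R_1\otimes_DR_2$. For the third isomorphism, I would invoke associativity once more to regroup $M_1\otimes_{R_1}(R_1\otimes_DR_2)\otimes_{R_2}M_2$ as $(M_1\otimes_{R_1}R_1)\otimes_D(R_2\otimes_{R_2}M_2)$, and then apply the cancellation isomorphisms $M_1\otimes_{R_1}R_1\cong M_1$ and $R_2\otimes_{R_2}M_2\cong M_2$ to reach $M_1\otimes_DM_2$.

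The only point requiring genuine care — and the main, if mild, obstacle — is bookkeeping the $T$-module structure through the chain: one must confirm that the composite identifies the original $T$-action with the action on $M_1\otimes_DM_2$ in which an element $r_1\otimes r_2$ of $T=R_1\otimes_DR_2$ multiplies the first factor by $r_1$ and the second by $r_2$. Since each individual step is a standard associativity or cancellation isomorphism, tracking an elementary tensor $m_1\otimes m_2$ (together with an acting element $r_1\otimes r_2$) through all of them is routine and completes the proof.
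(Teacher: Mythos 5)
Your proposal is correct and follows exactly the route the paper indicates (the paper offers no separate proof, only the inline justifications ``tensor cancellation,'' ``definition of $T$,'' and ``associativity,'' which are precisely the three steps you carry out). Your added care in tracking the $T$-module structure through the chain of isomorphisms is appropriate and fills in the only detail the paper leaves implicit.
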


\begin{disc}\label{disc181224a}
If $R$ is an integral domain, then $R$ either contains a field or an isomorphic copy of $\bbz$.
In particular, $R$ contains a subring $D$ that is  a principal ideal domain. (In this paper, we consider fields to be principal ideal domains.)
With this set-up, every torsion-free $R$-module is torsion-free over $D$, hence flat over $D$; in particular, $R$ is flat over $D$, as is every reflexive $R$-module.
\end{disc}

\begin{notn}\label{notn191006a}
Given a noetherian ring $R$, we let $\s_0(R)$ denote the set of isomorphism classes of semidualizing $R$-modules.
\end{notn}

\begin{disc}\label{disc191006b}
If $R$ is noetherian and local, then $\s_0(R)$ is finite by~\cite[Theorem~A]{nasseh:gart}.
The finiteness can fail if $R$ is not local; see Example~\ref{ex190329a}. 
See, however, Fact~\ref{prop190312a} for an important non-local case where finiteness does hold.
A version of $\s_0(R)$ that is suspected to always be finite is described below; see Remark~\ref{disc191101b}\eqref{disc191101b2}.
%studied in~\cite{frankild:rbsc}.
%It is constructed as the orbit space of a natural action of the Picard group $\Pic(R)$ on $\s_0(R)$,
%and is shown to be finite when $R$ is semilocal in~\cite[Theorem~4.6]{nasseh:gart}.
\end{disc}

The set $\s_0(R)$ has more structure, as we describe next.
A feature of our results in~\cite{sather:divisor,SWSeSpP4,SWSeSpP1,SWSeSpP2} is that 
we can also keep track of this additional structure.

\begin{defn}\label{defn191006a}
Let $C$ and $M$ be  finitely generated
modules over a noetherian ring $R$. Then $M$ is 
\emph{totally $C$-reflexive} if $M \cong \Hom{\Hom MC}C$ 
%(equivalently, the natural biduality map
%$M \to \Hom{\Hom MC}C$ is an isomorphism   \cite[Proposition~5.4.1]{sather:sdm}) 
and $\Ext{\geq 1}MC=0=\Ext{\geq 1}{\Hom MC}C$.
If $M$ and $C$ are %is also
semidualizing, then we relate the isomorphism classes $[M],[C]\in\s_0(R)$ as $[C]\tri[M]$ provided that
$M$ is totally $C$-reflexive.
\end{defn}

\begin{disc}
When $R$ is noetherian and local, the relation $\tri$ on $\s_0(R)$ is reflexive and antisymmetric by~\cite[(5.3)]{takahashi:hiatsb}. 
Antisymmetry can fail when $R$ is not local; see, however, Remark~\ref{disc191101b}\eqref{disc191101b3}.
%, but it does so in a way that is controlled by the Picard group action; see~\cite{frankild:rbsc}.
It is suspected
%, though unsubstantiated, 
that $\tri$ is transitive; see \cite[Question~2.5]{sather:sdm}.
\end{disc}

For noetherian rings $R$ and $S$, we use Definition~\ref{defn191006a} with  the following to build a relation on  $\s_0(R)\times\s_0(S)$.

\begin{notn}\label{disc181225a}
Let $U$ and $V$ be sets with relations denoted $\tri$. Then we consider the product relation on $U\times V$,
where $(u,v)\tri(u',v')$ provided that $u\tri u'$ and $v\tri v'$. We write $U\approx V$ if there is 
a \emph{perfectly relation-respecting bijection} from $U$ to $V$, i.e., a bijection $f\colon U\to V$ such that $u\tri u'$ if and only if 
$f(u)\tri f(u')$ for all $u,u'\in U$.
\end{notn}

\begin{disc} \label{disc:flat1}
Let $\vf\colon R\to S$ be a flat ring homomorphism between noetherian rings. 
It is straightforward to show that the map
$\s_0(\vf)\colon\s_0(R)\to\s_0(S)$ given by $[C]\mapsto[\Otimes CS]$ is well-defined and \emph{relation-respecting},
i.e., $[C]\tri[M]$ implies $[\Otimes CS]\tri[\Otimes MS]$ for all $[C],[M]\in\s_0(R)$.
\end{disc}

We continue this section by discussing how semidualizing modules and divisor class groups interact.

\begin{defn}
The \emph{divisor class group} $\Cl(A)$ of a  normal domain $A$ is the set of isomorphism classes
of finitely generated rank-1 reflexive $A$-modules with operation
$[\fa]+[\fb]=[(\fa\otimes_A\fb)^{**}]$.  The additive identity is $[A]$, the inverse of $[\fb]$ is $-[\fb]=[\Hom[A]\fb A]$, and 
$[\fa]-[\fb]=[\Hom[A]\fb\fa]$.
\end{defn}

\begin{disc} \label{disc:flat2}
It is a well-known fact that if $\vf\colon A\to B$ is a flat ring homomorphism between normal domains, then the map
$\Cl(\vf)\colon\Cl(A)\to\Cl(B)$ given by $[\fa]\mapsto[\OAtimes \fa B]$ is a well-defined group homomorphism.  The lemma below relates $\s_0(A)$ and $\Cl(A)$.
\end{disc}

\begin{lem}\label{lem190103a}
Let $A$ be a  normal domain, and let $N$ be a finitely generated $A$-module. 
Then $N$ is semidualizing over $A$ if and only if it is $A$-reflexive of rank 1 and $\Ext[A]{\geq 1}NN=0$.
In particular, $\s_0(A)\subseteq\Cl(A)$.
\end{lem}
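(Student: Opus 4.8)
The plan is to prove the two directions of the biconditional separately, then derive the "in particular" statement as an immediate consequence.

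For the forward direction, suppose $N$ is semidualizing over $A$, so $\Hom[A]NN\cong A$ and $\Ext[A]{\geq 1}NN=0$. The second condition already gives half of the reflexivity hypothesis $\Ext[A]{\geq 1}NN=0$, so I only need to produce $A$-reflexivity of rank $1$. Since $A$ is a normal domain, it suffices to check that $N$ is reflexive locally in codimension $\leq 1$ and satisfies Serre's condition; concretely, over a normal domain a finitely generated module is reflexive iff it satisfies $(S_2)$ and is free (equivalently reflexive) in codimension $\leq 1$. First I would localize: for every prime $\p$ of $A$, the module $N_\p$ is semidualizing over $A_\p$ (semidualizing modules localize, since $\Hom$ and $\Ext$ commute with localization for finitely generated modules over noetherian rings). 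When $\Ht\p\leq 1$, the ring $A_\p$ is regular (normal of dimension $\leq 1$), hence a semidualizing module over it is free of rank $1$ — this is classical, as over a regular local ring $R$ is the only semidualizing module up to isomorphism. In particular $N$ has constant rank $1$ at the minimal primes. For the $(S_2)$ condition: from $\Hom[A]NN\cong A$ together with $\Ext[A]1NN=0$ I can control the depth of $N$ at each localization — indeed $N$ is a semidualizing $A_\p$-module, and a semidualizing module over a noetherian local ring $R$ always has $\depth_R N=\depth R$ (a standard property, e.g. from the Auslander–Bridger/Foxby formalism). So $\depth_{A_\p}N_\p=\depth A_\p\geq\min\{2,\Ht\p\}$ since $A$ is normal hence $(S_2)$. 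These two facts — freeness in codimension $\leq 1$ and $(S_2)$ — together force $N$ to be reflexive of rank $1$.

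For the reverse direction, assume $N$ is $A$-reflexive of rank $1$ and $\Ext[A]{\geq 1}NN=0$. I must show $\Hom[A]NN\cong A$. The natural homothety map $\chi\colon A\to\Hom[A]NN$ sending $a$ to multiplication by $a$ is injective because $A$ is a domain and $N\neq 0$. Since $N$ has rank $1$, both $A$ and $\Hom[A]NN$ have rank $1$, so $\coker\chi$ is a torsion module. Now $\Hom[A]NN$ is a second-dual-type module: as $N$ is reflexive, $\Hom[A]NN\cong\Hom[A]{N}{\Hom[A]{\Hom[A]NA}A}\cong\Hom[A]{N\otimes_A\Hom[A]NA}{A}$, which is a dual module, hence reflexive, hence $(S_2)$ and torsion-free. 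The key point is then that $\chi$ is an isomorphism in codimension $\leq 1$: for $\Ht\p\leq 1$, $A_\p$ is a DVR or a field, $N_\p$ is reflexive of rank $1$ over it, hence free of rank $1$, so $\Hom[A_\p]{N_\p}{N_\p}\cong A_\p$ and one checks $\chi_\p$ is the identity up to the isomorphism. Thus $\coker\chi$ is torsion and vanishes in codimension $\leq 1$; but $\coker\chi$ embeds (via the long exact sequence, using $\ker$ considerations) into an $(S_2)$ module — more carefully, since $A$ and $\Hom[A]NN$ are both reflexive and $\chi$ is injective, $\coker\chi$ has no associated primes of height $\geq 2$ only if… here I need the argument that a torsion submodule situation forces vanishing. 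The cleanest route: $\chi$ is a map of reflexive modules that is an isomorphism in codimension $\leq 1$, and any such map is an isomorphism (apply $\Hom[A]{-}{A}$ twice and use that both sides are reflexive and the induced map on duals is an isomorphism since it is so in codimension $1$ and duals are determined in codimension $1$). This gives $\Hom[A]NN\cong A$, so $N$ is semidualizing.

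Finally, the inclusion $\s_0(A)\subseteq\Cl(A)$ follows because the biconditional shows every semidualizing $A$-module is reflexive of rank $1$, and $\Cl(A)$ is by definition the set of isomorphism classes of finitely generated rank-$1$ reflexive modules; the assignment $[C]\mapsto[C]$ is the claimed inclusion (one should note it respects nothing more than set membership here, which is all that is asserted). **The main obstacle** I anticipate is the reverse direction — specifically, upgrading "$\chi$ is an isomorphism in codimension $\leq 1$ between reflexive modules" to "$\chi$ is an isomorphism" cleanly, without circular appeals; the reflexivity of $\Hom[A]NN$ and the standard fact that reflexive modules over a normal domain are determined by their codimension-$1$ behavior is what makes this go through, so I would isolate that as a small lemma or cite it from Bourbaki/standard commutative algebra references on divisorial modules.
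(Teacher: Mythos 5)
Your proof is correct, but it takes a more self-contained route than the paper on both halves. For the forward implication the paper simply cites \cite[Proposition~3.4]{sather:divisor}, whereas you essentially reprove that result via localization: freeness of $N_\p$ in codimension $\leq 1$ (using $(R_1)$ and the uniqueness of semidualizing modules over regular local rings) together with $\depth_{A_\p}N_\p=\depth A_\p$ and $(S_2)$ for $A$. For the converse the paper exploits the group law on $\Cl(A)$ that it has already recorded, namely $[\fa]-[\fb]=[\Hom[A]{\fb}{\fa}]$, to get $[\Hom[A]NN]=[N]-[N]=[A]$ in one line and conclude $\Hom[A]NN\cong A$; your argument --- the homothety $\chi\colon A\to\Hom[A]NN$ is a map of reflexive modules that is an isomorphism in codimension $\leq 1$, hence an isomorphism --- is exactly the commutative algebra hiding inside that group law, made explicit. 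Neither proof uses the hypothesis $\Ext[A]{\geq1}NN=0$ in the converse, so you are consistent with the paper there. The paper's version buys brevity by leaning on the stated structure of $\Cl(A)$ and an external citation; yours buys transparency and independence from \cite{sather:divisor}, at the cost of needing the standard facts that $\Hom[A]NN$ is reflexive and that reflexive modules over a normal domain are determined by their behavior in codimension one, which you rightly flag as the lemma to isolate or cite.
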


\begin{proof}
The forward implication is from~\cite[Proposition~3.4]{sather:divisor}.
For the converse, assume that $N$ is $A$-reflexive of rank 1 and $\Ext[A]{\geq 1}NN=0$.
Since $N$ is $A$-reflexive of rank~1, it represents an element of $\Cl(A)$, and in this group, we have
$[A]=[N]-[N]=[\Hom[A]NN]$.
Thus, $\Hom[A]NN\cong A$, and hence $N$ is semidualizing.
\end{proof}

%\begin{disc}\label{disc:flat2}~\cite[Lemma~3.9(d)]{sather:divisor}  If $\vf\colon A\to A'$ is a flat ring homomorphism between normal domains, then the map
%$\Cl(\vf)\colon\Cl(A)\to\Cl(A')$ given by $[\fa]\mapsto[\Otimes \fa A']$ is a well-defined group homomorphism. 
%\end{disc}

We conclude this section with a version of $\s_0(R)$, for a noetherian ring $R$, that is better behaved when $R$ is non-local.  This is Definition~\ref{non-local}.

\begin{defn}
A finitely generated module $\fa$ over a noetherian ring $R$ is \emph{invertible}
if there is another finitely generated $R$-module $\fb$ such that $\Otimes\fa\fb\cong R$. 
Equivalently, $\fa$ is invertible if and only if it is finitely generated and projective of rank 1.
When these conditions are satisfied, the module $\fb=\Hom[R]\fa R$ satisfies $\Otimes\fa\fb\cong R$.
The \emph{Picard group} $\Pic(R)$ of a noetherian ring $R$ is the set of isomorphism classes
of invertible $R$-modules with operation $[\fa]+[\fb]=[\fa\otimes_R\fb]$.
% and $[\fa]-[\fb]=[\Hom[R]\fb\fa]$, and with additive identity $[R]$.
%and multiplicative inverses given by $-[\fb]=[\Hom[R]\fb R]$.
\end{defn}

If $\vf\colon R\to S$ is a homomorphism of noetherian rings, then there is a well-defined group homomorphism $\Pic(\vf)\colon\Pic(R)\to\Pic(S)$ given by $[\fa]\mapsto[\Otimes \fa S]$.  (The argument is straightforward: for the well-definedness, show that if $\fb$ represents the inverse of $\fa$ in $\Pic(R)$, then 
$\Otimes \fb S$ represents the inverse of $\Otimes \fa S$ in $\Pic(S)$.) 

\begin{disc}\label{disc191101a} There is a natural inclusion $\Pic(R)\subseteq\s_0(R)$ for each noetherian ring $R$.
(This follows, e.g., from the fact that the semidualizing property is local and the Picard group of a local ring is trivial.
Indeed, if $R$ is not necessarily local and $[\fa]\in\Pic(R)$, then $\fa_\fm\cong R_{\m}$ for each maximal $\m$;
thus, $\fa_\m$ is semidualizing over $R_\m$ for all $\m$, so $[\fa]\in\s_0(R)$.)
%
%It is straightforward to show that every invertible module over a noetherian ring $R$ is semidualizing:
%for instance, the semidualizing property is local, and locally every $[\fa]\in\Pic(R)$ satisfies $\fa_\fm\cong R_{\m}$.)
%In particular, the Picard group of a local ring is trivial.
%Consequently, we have a natural inclusion $\Pic(R)\subseteq\s_0(R)$. 

Moreover, another localization argument shows that there is a well-defined action of $\Pic(R)$ on $\s_0(R)$ given by
$[\fa]+[C]=[\Otimes\fa C]$.
This action controls the lack of antisymmetry in $\s_0(R)$: for $[C],[M]\in\s_0(R)$, one has
$[M]\tri[C]\tri[M]$ if and only if $[C]=[\fa]+[M]$ for some $[\fa]\in\Pic(R)$.
See~\cite{frankild:rbsc} for details.  In the case that there is a flat homomorphism $\vf\colon R\to S$ of noetherian rings, then 
the map $\s_0(\vf)$ of Remark~\ref{disc:flat1} is equivariant. 
(Recall that if additive groups $G$ and $H$ act on sets $U$ and $V$, respectively, and $\psi\colon G\to H$ is 
a group homomorphism, then a function $\phi\colon U\to V$ is
\emph{equivariant} with respect to these actions if 
$\phi(g+u)=\psi(g)+\phi(u)$ for all $g\in G$ and all $u\in U$.)
\end{disc}

\begin{defn}\label{non-local}
For a noetherian ring $R$, let $\ol\s_0(R)$ denote the orbit space of $\s_0(R)$ under the action of $\Pic(R)$.
In other words,
$\ol\s_0(R)=\s_0(R)/\sim$ where $[M]\sim[C]$ provided that $[C]=[\fa]+[M]$ for some $[\fa]\in\Pic(R)$.
The orbit (i.e., equivalence class) of $[C]$ in $\ol\s_0(R)$ is denoted $\ec$.
For $\ec[M],\ec\in\sbar$ we write $\ec\tri\ec[M]$ whenever $[C]\tri[M]$ in $\s_0(R)$, i.e., provided that $M$ is totally $C$-reflexive.
\end{defn}

\begin{disc} \label{disc191101b}
Let $R$ be a noetherian ring.
\begin{enumerate}[(a)]
\item\label{disc191101b1} If $R$ is local, then $\ol\s_0(R)=\s_0(R)$ since $\Pic(R)$ is trivial.
\item\label{disc191101b2} If $R$ is semilocal, then $\ol\s_0(R)$ is finite
by~\cite[Theorem~4.6]{nasseh:gart}.
It is suspected that $\ol\s_0(R)$ is finite even when $R$ is not semilocal.
\item\label{disc191101b3} The relation $\tri$ on $\sbar$ is well-defined, reflexive, and antisymmetric by Remark~\ref{disc191101a}.
It is suspected that $\tri$ is also transitive on $\sbar$.
\item\label{disc191101b4} 
If $\vf\colon R\to S$ is a flat homomorphism of noetherian rings, then the fact that the map 
$\s_0(\vf)\colon \s_0(R)\to\s_0(S)$ is equivariant with respect to the Picard group actions implies that
the rule $\ec[C]\mapsto\ec[\Otimes CS]$ describes a well-defined function $\ol\s_0(\vf)\colon \ol\s_0(R)\to\ol\s_0(S)$.
\end{enumerate}
\end{disc}

%%%%%%%%%%%%%%%%%%%%%%%%%%%%%%%%%%%%%
\section{Localizations of Semidualizing Modules}
\label{sec191006b}
%%%%%%%%%%%%%%%%%%%%%%%%%%%%%%%%%%%%%

The work in~\cite{sather:divisor,SWSeSpP4} focuses on the case of graded normal domains
where the semidualizing modules are particularly well-behaved, as we see next.

\begin{fact}
\label{prop190312a}
Let $A=\oplus_{i=0}^\infty A_i$ be a  graded normal domain with $(A_0,\m_0)$  local (e.g., a field), and consider the maximal ideal 
$\m=\m_0\oplus A_+=\m_0\oplus A_1\oplus A_2\oplus\cdots$.
Then the natural map $\Cl(A)\to\Cl(A_\m)$ is an isomorphism and every element of $\Cl(A)$ is represented by a homogeneous
rank-1 reflexive module.
(Argue as in~\cite[Corollary~10.3]{fossum:dcgkd} using, e.g., \cite[Lemma~2.14]{frankild:rrhffd}.)
Furthermore, the natural map $\s_0(A)\to\s_0(A_\m)$ is a perfectly relation-respecting bijection, and every semidualizing $A$-module is graded
by~\cite[Proposition~3.12 and its proof]{sather:divisor}. 
In particular, it follows that $|\s_0(A)|=|\s_0(A_\m)|<\infty$ because of Remark~\ref{disc191006b}.
\end{fact}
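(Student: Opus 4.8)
The plan is to deduce all three statements from two ingredients: the structure of divisor class groups of graded Krull domains under localization at $\m$, as in~\cite[Corollary~10.3]{fossum:dcgkd} and~\cite[Lemma~2.14]{frankild:rrhffd}; and the elementary principle that a finitely generated \emph{graded} $A$-module $E$ with $E_\m=0$ must be zero. For the latter, note that every proper homogeneous ideal $\fb\subsetneq A$ satisfies $\fb\subseteq\m$: writing $\fb=\oplus_{i\geq0}(\fb\cap A_i)$, the fact that $A_0$ is local together with $1\notin\fb$ forces $\fb\cap A_0\subseteq\m_0$, so $\fb\subseteq\m_0\oplus A_+=\m$; applying this to $\fb=\ann_AE$ shows $\m\in\Supp(E)$ whenever $E\neq0$.

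First I would handle the divisor class group. Every class in $\Cl(A)$ is represented by a homogeneous rank-$1$ reflexive module, since $\Cl(A)$ is generated by the classes of height-one primes and, running the graded Krull domain argument of~\cite[Corollary~10.3]{fossum:dcgkd}, the non-homogeneous height-one primes can be discarded, leaving the homogeneous ones. The natural map $\Cl(A)\to\Cl(A_\m)$ is surjective because every height-one prime of the Krull domain $A_\m$ is extended from its height-one contraction in $A$. For injectivity, if $\fa$ is a homogeneous divisorial ideal with $\fa_\m\cong A_\m$, then graded Nakayama gives that the minimal number of homogeneous generators of $\fa$ equals $\dim_{A/\m}(\fa\otimes_A A/\m)=\dim_{A_\m/\m A_\m}(\fa_\m\otimes_{A_\m}A_\m/\m A_\m)=1$, so $\fa$ is generated by a single homogeneous element and $[\fa]=0$ in $\Cl(A)$; together with the homogeneity of classes this gives injectivity. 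Hence $\Cl(A)\to\Cl(A_\m)$ is an isomorphism.

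Next I would pass to semidualizing modules. By Lemma~\ref{lem190103a} we have $\s_0(A)\subseteq\Cl(A)$ and $\s_0(A_\m)\subseteq\Cl(A_\m)$, and since $\Hom[A]{-}{-}$ and $\Ext[A]{i}{-}{-}$ of finitely generated modules commute with the flat base change $A\to A_\m$, the map $[C]\mapsto[C_\m]$ of the statement is the restriction of the isomorphism $\Cl(A)\to\Cl(A_\m)$; in particular it is injective. For surjectivity, lift $[C']\in\s_0(A_\m)\subseteq\Cl(A_\m)$ to a homogeneous $[\fa]\in\Cl(A)$ with $\fa_\m\cong C'$. Then $\fa$ is reflexive of rank $1$, and for $i\geq1$ the finitely generated graded module $\Ext[A]{i}{\fa}{\fa}$ has $(\Ext[A]{i}{\fa}{\fa})_\m\cong\Ext[A_\m]{i}{\fa_\m}{\fa_\m}=0$, hence vanishes; thus $\Ext[A]{\geq 1}{\fa}{\fa}=0$ and $\fa$ is semidualizing by Lemma~\ref{lem190103a}, so $[\fa]$ is a preimage of $[C']$. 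The same reasoning, or~\cite[Proposition~3.12 and its proof]{sather:divisor}, shows that every semidualizing $A$-module $C$ is graded up to isomorphism: $[C]\in\s_0(A)\subseteq\Cl(A)$ is represented by a homogeneous $\fa$, and $[C]=[\fa]$ in $\Cl(A)$ means $C\cong\fa$.

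It remains to treat the relation $\tri$ and finiteness. That $[C]\mapsto[C_\m]$ is relation-respecting is Remark~\ref{disc:flat1} for the flat map $A\to A_\m$. For the converse, let $M$ and $C$ be semidualizing over $A$ with $M_\m$ totally $C_\m$-reflexive; by the previous paragraph I may take $M$ and $C$ graded, so that $\Hom[A]{M}{C}$, the modules $\Ext[A]{i}{M}{C}$ and $\Ext[A]{i}{\Hom[A]{M}{C}}{C}$, and the natural biduality map $M\to\Hom[A]{\Hom[A]{M}{C}}{C}$ are all finitely generated and graded. Localizing at $\m$ produces the corresponding data for $M_\m$ over $A_\m$; since $M_\m$ is totally $C_\m$-reflexive, the localized Ext modules vanish and the localized biduality map is an isomorphism. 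Hence the graded Ext modules, and the kernel and cokernel of the graded biduality map, all vanish, so $M$ is totally $C$-reflexive and $[C]\tri[M]$. Thus $[C]\mapsto[C_\m]$ is a perfectly relation-respecting bijection and $|\s_0(A)|=|\s_0(A_\m)|$, which is finite by Remark~\ref{disc191006b} since $A_\m$ is noetherian local. The hard part is the divisor-class-group step---chiefly that every class is homogeneous and that the localization map is injective on homogeneous classes---since that is where the graded Krull domain machinery of~\cite{fossum:dcgkd,frankild:rrhffd} genuinely enters; granting it, everything else is flat base change together with the principle that graded finitely generated modules are detected at $\m$.
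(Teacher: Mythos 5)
Your argument is correct and follows essentially the same route the paper takes via its citations: homogeneous representatives of divisor classes and the isomorphism $\Cl(A)\to\Cl(A_\m)$ from the graded Krull domain machinery of Fossum, combined with the containment $\s_0\subseteq\Cl$, flat base change for $\Ext$, and the principle that a finitely generated graded module vanishing at the irrelevant maximal ideal is zero (this is precisely the strategy of \cite[Proposition~3.12]{sather:divisor}, which the Fact cites). The only point worth flagging is that you correctly work with the natural biduality map rather than an abstract isomorphism when transferring total $C$-reflexivity back from $A_\m$ to $A$, which is the right reading of the definition.
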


Without the graded assumption, the  natural map $\s_0(A)\to\s_0(A_\m)$ from Fact~\ref{prop190312a} fails to be bijective, as is evinced by the next two examples. 

\begin{ex}[\protect{\cite[Remark~4.2.12]{sather:sdm}}]
\label{ex190329a}
Let $G$ be a non-zero abelian group. 
A result of Claborn~\cite{claborn} states that there is a Dedekind domain $D$ such that $\Pic(D)\cong G$. 
The fact that $D$ is a Dedekind domain implies that $\Cl(D)=\Pic(D)$, thus,
%~\cite[3.6]{sather:divisor} implies
Lemma~\ref{lem190103a} and Remark~\ref{disc191101a} imply
$$\Cl(D)=\Pic(D)\subseteq\s_0(D)\subseteq\Cl(D).$$
As a result, $|\s_0(D)|=|G| > 1$. However, the Dedekind domain assumption further implies that for every maximal ideal $\m\subseteq D$,
the localization $D_\m$ is regular, so 
$|\s_0(D_\m)|=1$ by~\cite[(8.6)~Corollary]{christensen:scatac}. In particular, the map $\s_0(D)\to\s_0(D_\m)$ is not injective for any $\m$. 
\end{ex}

We are interested in understanding when the  natural map $\s_0(A)\to\s_0(A_\m)$ is bijective. 
Before posing an official question, we discuss some more limitations.

\begin{disc}\label{disc190329a}
Let $R$ be a  noetherian ring, and let $\m$ be a maximal ideal of $R$.
If the natural map $j\colon \s_0(R)\to\s_0(R_\m)$ is injective, then $\Pic(R)=0$ because 
$\Pic(R)\subseteq\s_0(R)$ implies $\Pic(R)\cong j(\Pic(R))\subseteq\Pic(R_\m)=0$. 
%$\Pic(A)=0$. Indeed, we have 
%$\Pic(A)\subseteq\s_0(A)$ and $j(\Pic(A))\subseteq\Pic(A_\m)=0$. The fact that $j$ is injective now implies that  $\Pic(A)=0$. 
From this, it follows that $\sbar[R]=\s_0(R)$.
\end{disc}

From this remark, we see next that 
the  natural map  $\s_0(A)\to\s_0(A_\m)$ can fail to be bijective
for standard graded integral domains that are not normal.

\begin{ex}\label{ex190329b}
Let $\sfk$ be a field. Coykendall~\cite[Example~2.3]{MR1335714} shows that the graded integral domain $R=\sfk[s^2,s^3,w]\cong\sfk[x,y,w]/(x^3-y^2)$ satisfies
$\Pic(R)\neq 0$. This ring is not standard graded, but one can homogenize it to obtain the standard graded integral domain 
$$A=\sfk[s^2t,s^3t,t,w]\cong\sfk[x,y,z,w]/(x^3-y^2z).$$
As in~\cite[Example~2.3]{MR1335714}, one checks that 
the fractionary ideal $I=(1+stw,s^2tw^2)A$ represents a non-zero element of $\Pic(A)$, so
$\Pic(A)\neq 0$. 
Thus, for each maximal ideal $\m\subseteq A$, e.g., for $\m$ the irrelevant maximal ideal of $A$, the natural map $j\colon \s_0(A)\to\s_0(A_\m)$ is not injective.
\end{ex}

With the above information in mind, we pose the following.

\begin{question}\label{q190312a}
Let $A=\oplus_{i=0}^\infty A_i$ be a quotient of a standard graded polynomial ring in finitely many indeterminates over a field $A_0$ by a 
homogenous ideal $I$,
and set $\m=A_+=\oplus_{i=1}^\infty A_i$.
Under what conditions must the natural map $j\colon\s_0(A)\to\s_0(A_\m)$ be a bijection?
In particular, when must $\s_0(A)$ be finite?
If $I$ is a (square-free) monomial ideal, must the natural map $j\colon\s_0(A)\to\s_0(A_\m)$ be a bijection, and must $\s_0(A)$ be finite?
\end{question}

\begin{disc}\label{disc190329ax}
Let $A$ be a quotient of a standard graded polynomial ring in finitely many indeterminates over a field $A_0$ by a square-free monomial ideal. 
In light of Remark~\ref{disc190329a},
circumstantial evidence for injectivity in the last part of Question~\ref{q190312a} comes from~\cite[Corollary~4.14]{brenner:picard} which says that 
$\Pic(A)=0$. 

In this situation, one might try 
to answer Question~\ref{q190312a}, as follows. 
Let $C$ and $M$ be semidualizing $A$-modules such that $j([C])=j([M])$, i.e., such that $C_\m\cong M_\m$. 
It follows that
$$\Hom[A]{M}{C}_\m\cong\Hom[A_\m]{M_\m}{C_\m}\cong\Hom[A_\m]{C_\m}{C_\m}\cong A_\m.$$
It would be nice to conclude that $\Hom[A]{M}{C}\cong A$;
however, it is not clear how to conclude this. If one knew that $\Hom[A]{M}{C}$ were semidualizing over $A$ (or if one knew the stronger condition
of $M$ being totally $C$-reflexive), then there might be a chance;
however, it is not clear how to conclude this.

On the other hand, if $C$ and $M$ are graded over $A$, then~\cite[Proposition~2.11(5)]{SWSeSpP2} implies that $C\cong M$, as desired. Thus, we add
the next question.
\end{disc}

\begin{question}\label{q190329a}
Let $A$ be as in Remark~\ref{disc190329ax}.
Must every semidualizing $A$-module be graded?
\end{question}

We close this section with one more question, preceded by motivating discussion.

\begin{disc}\label{disc190813a}
Let $A$ be a normal domain, and let $U\subseteq A$ be a muliplicatively closed subset generated by prime elements. 
Nagata's theorem~\cite[Corollary~7.2]{fossum:dcgkd} tells us that the top horizontal map in the following commutative diagram is bijective.
$$\xymatrix{\Cl(A)\ar[r]^-\cong&\Cl(U^{-1}A)\\
\s_0(A)\ar[r]\ar@{^(->}[u]
&\s_0(U^{-1}A)\ar@{^(->}[u]
}$$
A straightforward diagram chase shows that the bottom horizontal map is injective. 
However, we do not know whether this map is bijective. 
(Note that one must have some assumptions on $A$ and $U$ in order for this to be so because 
of~\cite[Theorem~B]{nasseh:ahplrdmi}.)
Several of our proofs in~\cite{SWSeSpP4} would be simplified if this map were bijective.
Thus, we pose the following question.
\end{disc}

\begin{question}\label{q190103a}
Let $A$ be a normal domain, and let $U\subseteq A$ be a muliplicatively closed subset generated by prime elements. 
Must the natural map $\s_0(A)\to\s_0(U^{-1}A)$ be bijective?
\end{question}

%%%%%%%%%%%%%%%%%%%%%%%%%%%%%%%%%%%%%%
\section{Tensor Products and Semidualizing Modules} \label{sec181225a}
%%%%%%%%%%%%%%%%%%%%%%%%%%%%%%%%%%%%%%

The following assumption is somewhat restrictive, though it applies to several cases of interest, as we show below.

\begin{assumption}\label{ass181224a}
For the remainder of this paper, let $D$ be a 
principal ideal domain. 
For $i=1,2$ let $R_i$ be a noetherian integral domain equipped with a ring monomorphism
$f_i\colon D\to R_i$.
Set $T=R_1\otimes_DR_2$ equipped with the natural ring homomorphisms $g_i\colon R_i\to T$, 
which make the following diagram commute.
\begin{equation}\begin{split}\label{diag181225a}
\xymatrix{D\ar[r]^-{f_1}\ar[d]_{f_2}&R_1\ar[d]^-{g_1}\\
R_2\ar[r]_{g_2}&T
}\end{split}\end{equation}
\end{assumption}

The next example  shows  how we apply the results of this paper in~\cite{SWSeSpP4}.

\begin{ex}\label{ex181224c}
Let $X=(X_{ij})$ be an $m\times n$ matrix of indeterminates.
A \emph{ladder} in $X$ is a subset $Y$ satisfying the following property:
if $X_{ij},X_{pq}\in Y$ satisfy $i\leq p$ and $j\leq q$, then $X_{iq},X_{pj}\in Y$.  
%Recall that $R_t(Y) = \sfk[Y]/I_t(Y)$ is the associated \emph{ladder determinantal ring}, where $I_t(Y)$ is the ideal generated by the $t \times t$ minors of $X$ lying entirely in $Y$.  
As in~\cite[p.~121(b)]{Co}, to avoid trivialities, we assume without loss of generality that $X_{m1},X_{1n}\in Y$ and furthermore
that each row of $X$ contains an element of $Y$, as does  each column of $X$.
Given an integral domain $A$, let $I_t(Y)$ denote the ideal in the polynomial ring $A[Y]$
generated by the $t \times t$ minors of $X$ lying entirely in $Y$, and set $A_t(Y)=A[Y]/I_t(Y)$. 
The special case where $Y=X$ and $A$ is a normal domain is discussed in the introduction.

Let $D$ be a 
principal ideal domain contained in $A$ as a subring;
see Remark~\ref{disc181224a}.
Then the inclusions $D\subseteq A=R_1$ and $D\subseteq D_t(Y)=R_2$ satisfy Assumption~\ref{ass181224a}, and we have
$T=A\otimes_DD_t(Y)\cong A_t(Y)$.
\end{ex}

Here is another example, for perspective.

\begin{ex}\label{ex191006a}
Let $S$ be an additive monoid, and for any integral domain $A$ let $A[S]$ denote the associated semigroup ring;
this is the free $A$-module with basis $\{x^s\mid s\in S\}$ with multiplication induced by the rule $x^sx^t=x^{s+t}$ for all $s,t\in S$. 
If $D$ is as in Example~\ref{ex181224c}, then the inclusions 
$D\subseteq A=R_1$ and $D\subseteq D[S]=R_2$ satisfy Assumption~\ref{ass181224a}, and we have
$T=A\otimes_DD[S]\cong A[S]$.
\end{ex}

The following  is a well-known version of the K\"unneth formula. 
%(There is a version for Tor, but it is not necessary to our purpose; see~\cite[Proposition~A.1.5]{sather:sdm}.)

\begin{lem}\label{fact181224b}
Let $M_i, N_i$ be $R_i$-modules, for $i=1,2$, such that $M_i$ is finitely generated. Assume that one of the $M_i$ is torsion-free over $R_i$. %for $t=1$ or $t=2$.
Assume further that 
\begin{enumerate}[\rm(1)]
\item\label{fact181224b1} $D$ is a field, or 
\item\label{fact181224b2} for $j=1$ or $j=2$ the $R_j$-module $\Ext[R_j]{n}{M_j}{N_j}$ is torsion-free for all $n\geq 0$.
\end{enumerate}
Then for all $n\geq 0$ we have $T$-module isomorphisms
$$\Ext[T]{n}{M_1\otimes_DM_2}{N_1\otimes_DN_2}
\cong\bigoplus_{p+q=n}\Ext[R_1]{p}{M_1}{N_1}\otimes_D\Ext[R_2]{q}{M_2}{N_2}.$$
In particular, this implies that
$$\Hom[T]{M_1\otimes_DM_2}{N_1\otimes_DN_2}
\cong\Hom[R_1]{M_1}{N_1}\otimes_D\Hom[R_2]{M_2}{N_2}.$$
\end{lem}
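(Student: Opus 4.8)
The statement to prove is the Künneth-type formula for $\Ext$ over the tensor product $T = R_1 \otimes_D R_2$, together with its $\Hom$-corollary. The natural approach is to build free (or, more carefully, projective) resolutions over each $R_i$ and tensor them together over $D$ to get a resolution over $T$, then compute cohomology of the resulting double (or total) complex via the algebraic Künneth theorem for complexes over the principal ideal domain $D$. The hypothesis that $D$ is a PID is exactly what makes the classical Künneth spectral sequence collapse to a short exact sequence (indeed, here, to an isomorphism), since the relevant $\tor$-obstruction terms over $D$ vanish under the torsion-freeness assumptions.

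\textbf{Key steps.} First, I would choose a resolution $P_\bullet \xra{\simeq} M_1$ by finitely generated free $R_1$-modules and a resolution $Q_\bullet \xra{\simeq} M_2$ by finitely generated free $R_2$-modules. Applying $\Hom[R_1]{-}{N_1}$ and $\Hom[R_2]{-}{N_2}$ yields cochain complexes $\Hom[R_1]{P_\bullet}{N_1}$ and $\Hom[R_2]{Q_\bullet}{N_2}$ whose cohomologies are $\Ext[R_1]{p}{M_1}{N_1}$ and $\Ext[R_2]{q}{M_2}{N_2}$. Second, I would identify $\Hom[T]{P_\bullet \otimes_D Q_\bullet}{N_1 \otimes_D N_2}$ with the total complex of $\Hom[R_1]{P_\bullet}{N_1} \otimes_D \Hom[R_2]{Q_\bullet}{N_2}$; this uses Fact~\ref{fact181224a}-style cancellation and associativity, plus the fact that the $P_i, Q_j$ are finitely generated free, so $\Hom$ out of a tensor product of a finite free module factors as claimed. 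Third, I must verify that $P_\bullet \otimes_D Q_\bullet$ is a $T$-projective resolution of $M_1 \otimes_D M_2$: each term is $T$-free (a tensor product over $D$ of a free $R_1$-module and a free $R_2$-module is a free $T$-module), and exactness of the tensor-product complex follows from flatness over $D$ — here the hypothesis that one of the $M_i$ is torsion-free over $R_i$, hence flat over $D$ by Remark~\ref{disc181224a}, is what forces $P_\bullet \otimes_D Q_\bullet$ to be acyclic in positive degrees (this is the Künneth argument at the level of the resolutions themselves). Finally, I would invoke the algebraic Künneth formula over the PID $D$ for the complex $\Hom[R_1]{P_\bullet}{N_1} \otimes_D \Hom[R_2]{Q_\bullet}{N_2}$: in case~\eqref{fact181224b1} every $D$-module is flat, so the $\tor$ terms vanish automatically; in case~\eqref{fact181224b2}, the cohomology of one factor is $D$-torsion-free, hence $D$-flat (as $D$ is a PID), so again $\tor^D_1$ of the two cohomologies vanishes and Künneth degenerates to the asserted direct-sum isomorphism in each degree $n$. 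The $\Hom$ statement is the $n=0$ case, where the direct sum has a single summand.

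\textbf{Expected main obstacle.} The genuinely delicate point is the bookkeeping in the third step — confirming that $P_\bullet \otimes_D Q_\bullet$ really is a resolution of $M_1 \otimes_D M_2$ over $T$ and that the identification of the $\Hom$-complex with the tensor product of the two $\Hom$-complexes is compatible with the differentials (signs and all) — rather than any conceptual difficulty. One should also be a little careful that the algebraic Künneth theorem is being applied to complexes of $D$-modules that need not be bounded below or degreewise finitely generated after applying $\Hom$; but since $D$ is a PID this causes no trouble, as the standard proof of Künneth (via the short exact sequence of complexes $0 \to Z_\bullet \to C_\bullet \to B_\bullet[-1] \to 0$ with $Z_\bullet, B_\bullet$ made of submodules of frees, hence flat) goes through verbatim. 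Everything else is a routine unwinding of tensor-cancellation and associativity as already packaged in Fact~\ref{fact181224a}.
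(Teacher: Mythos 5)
The paper offers no proof of this lemma at all --- it is stated as ``a well-known version of the K\"unneth formula'' and left to the references --- so there is no argument of the authors' to measure yours against; your outline is the standard route. Steps one through three of your plan are sound: the terms of $P_\bullet\otimes_D Q_\bullet$ are $T$-free; the identification $\Hom[T]{P_p\otimes_DQ_q}{N_1\otimes_DN_2}\cong\Hom[R_1]{P_p}{N_1}\otimes_D\Hom[R_2]{Q_q}{N_2}$ holds because the $P_p,Q_q$ are finitely generated free; and $P_\bullet\otimes_DQ_\bullet$ resolves $M_1\otimes_DM_2$ because the terms, cycles, and boundaries of $P_\bullet$ and $Q_\bullet$ are torsion-free over the domains $R_i$, hence $D$-flat, while $\Tor[D]{1}{M_1}{M_2}=0$ by the torsion-freeness hypothesis on one of the $M_i$.

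The gap is in the final step, in case (2). The K\"unneth short exact sequence for $H^n(\operatorname{Tot}(C^\bullet\otimes_DE^\bullet))$ requires the \emph{terms and boundaries of one of the two complexes being tensored} to be $D$-flat; flatness of the cohomology of one factor is only the condition for the $\operatorname{Tor}$-term to vanish once the sequence is known to exist. Here $C^\bullet=\Hom[R_1]{P_\bullet}{N_1}$ and $E^\bullet=\Hom[R_2]{Q_\bullet}{N_2}$ have terms $N_1^{a_p}$ and $N_2^{b_q}$, and neither $N_1$ nor $N_2$ is assumed torsion-free over anything; so your parenthetical justification (``$Z_\bullet,B_\bullet$ made of submodules of frees, hence flat'') is valid for $P_\bullet$ and $Q_\bullet$ but not for the Hom-complexes to which you actually apply K\"unneth, whose cycles and boundaries sit inside copies of $N_i$. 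Moreover, flat cohomology of one factor is genuinely insufficient for the \emph{underived} tensor product of complexes: the complex $\bbz\xra{\ 2\ }\bbz\to\bbz/2\bbz$ is acyclic, hence has flat cohomology, yet applying $-\otimes_{\bbz}\bbz/2\bbz$ yields $\bbz/2\bbz\xra{\ 0\ }\bbz/2\bbz\xra{\ \cong\ }\bbz/2\bbz$, which has nonzero cohomology. To close the gap you must either add the hypothesis that $N_1$ or $N_2$ is $D$-torsion-free --- which holds in every application made in this paper, where the $N_i$ are $R_i$ itself or reflexive/semidualizing modules, and which makes the terms and boundaries of the corresponding Hom-complex $D$-flat --- or else extract more from hypothesis (2) than mere flatness of $H^*(C^\bullet)$. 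Case (1), where $D$ is a field, is unaffected.
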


The result below shows how our assumptions transform semidualizing $R_i$-modules into 
semidualizing $T$-modules. Example~\ref{ex181224c1} shows how this applies to the examples above.

\begin{prop}\label{prop181225a}
Assume that $T$ is noetherian,
and for $i=1,2$ let $C_i$ be a semidualizing $R_i$-module.
Then $C_1\otimes_DC_2$ is a semidualizing $T$-module.
Thus, there exists a well-defined, relation-respecting map $\s_0(g_1)\otimes\s_0(g_2)\colon\s_0(R_1)\times\s_0(R_2)\to\s_0(T)$ given by
$([C_1],[C_2])\mapsto [C_1\otimes_DC_2]$.
If $D$ is a field, then this map is injective. 
\end{prop}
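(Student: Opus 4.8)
The plan is to verify the three assertions in turn. First, to see that $C_1 \otimes_D C_2$ is semidualizing over $T$, I would apply the K\"unneth formula of Lemma~\ref{fact181224b} with $M_i = N_i = C_i$. To invoke it I need its hypotheses: each $C_i$ is finitely generated (part of being semidualizing), and each $C_i$ is torsion-free over $R_i$ since semidualizing modules have full support and are in particular torsion-free over a domain. For the second hypothesis of Lemma~\ref{fact181224b}, either $D$ is a field (case~(1)), or I use that $\Ext[R_j]{n}{C_j}{C_j} = 0$ for $n \geq 1$ while $\Ext[R_j]{0}{C_j}{C_j} \cong R_j$, which is torsion-free over $R_j$; so case~(2) holds trivially. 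Then the K\"unneth isomorphism gives, for $n \geq 1$,
$$\Ext[T]{n}{C_1 \otimes_D C_2}{C_1 \otimes_D C_2} \cong \bigoplus_{p+q=n} \Ext[R_1]{p}{C_1}{C_1} \otimes_D \Ext[R_2]{q}{C_2}{C_2} = 0,$$
since in each summand at least one of $p,q$ is positive. For $n=0$ the ``in particular'' clause of Lemma~\ref{fact181224b} gives
$$\Hom[T]{C_1 \otimes_D C_2}{C_1 \otimes_D C_2} \cong \Hom[R_1]{C_1}{C_1} \otimes_D \Hom[R_2]{C_2}{C_2} \cong R_1 \otimes_D R_2 = T.$$
Together with finite generation of $C_1 \otimes_D C_2$ over $T$, this shows $C_1 \otimes_D C_2$ is semidualizing.

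Next I would check that the map $([C_1],[C_2]) \mapsto [C_1 \otimes_D C_2]$ is well-defined and relation-respecting. Well-definedness is clear since an isomorphism $C_i \cong C_i'$ of $R_i$-modules induces an isomorphism $C_1 \otimes_D C_2 \cong C_1' \otimes_D C_2'$ of $T$-modules. For the relation-respecting property, suppose $[C_i] \tri [M_i]$ in $\s_0(R_i)$ for $i=1,2$, i.e., each $M_i$ is totally $C_i$-reflexive. I want $M_1 \otimes_D M_2$ totally $(C_1\otimes_D C_2)$-reflexive. Using Lemma~\ref{fact181224b} again (now with $M_i$ in the first slot and $C_i$ in the second; the torsion-freeness hypothesis on $\Ext$ in case~(2) holds because $\Ext[R_j]{n}{M_j}{C_j}$ vanishes for $n \geq 1$ and equals $\Hom[R_j]{M_j}{C_j}$ for $n=0$, which is reflexive hence torsion-free), I get $\Ext[T]{\geq 1}{M_1 \otimes_D M_2}{C_1 \otimes_D C_2} = 0$ and $\Hom[T]{M_1 \otimes_D M_2}{C_1 \otimes_D C_2} \cong \Hom[R_1]{M_1}{C_1} \otimes_D \Hom[R_2]{M_2}{C_2}$. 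Then, applying Lemma~\ref{fact181224b} once more to the modules $\Hom[R_i]{M_i}{C_i}$ and $C_i$, and using $M_i \cong \Hom{\Hom{M_i}{C_i}}{C_i}$, I obtain vanishing of $\Ext[T]{\geq 1}{\Hom[T]{M_1\otimes_D M_2}{C_1\otimes_D C_2}}{C_1\otimes_D C_2}$ and the biduality isomorphism $M_1\otimes_D M_2 \cong \Hom[T]{\Hom[T]{M_1\otimes_D M_2}{C_1\otimes_D C_2}}{C_1\otimes_D C_2}$. This gives $[C_1\otimes_D C_2] \tri [M_1 \otimes_D M_2]$. (One minor wrinkle: in the non-field case, each repeated application of Lemma~\ref{fact181224b} needs its torsion-freeness hypothesis checked, but each time the relevant $\Ext$ modules are either $0$ or a Hom into a semidualizing/reflexive module over a domain, hence torsion-free.)

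Finally, for injectivity when $D$ is a field: suppose $C_1 \otimes_{\sfk} C_2 \cong C_1' \otimes_{\sfk} C_2'$ as $T$-modules, where $D = \sfk$. The idea is to recover each $C_i$ by a base-change/localization argument. Tensoring along $g_i \colon R_i \to T$ is flat (since $T = R_1 \otimes_\sfk R_2$ is a flat $R_i$-module), so it suffices to recover $[C_i]$ from $[C_1 \otimes_\sfk C_2]$. Pick a maximal ideal $\n$ of $R_{3-i}$ with residue field $\ell \supseteq \sfk$; then $T / \n T \cong R_i \otimes_\sfk \ell$, and $(C_1 \otimes_\sfk C_2) \otimes_T (R_i \otimes_\sfk \ell) \cong C_i \otimes_\sfk (C_{3-i} \otimes_{R_{3-i}} \ell) \cong C_i \otimes_\sfk \ell^{r}$ where $r = \dim_\ell(C_{3-i} \otimes_{R_{3-i}} \ell) \geq 1$; better, choosing $\n$ so that $C_{3-i}$ is locally free of rank $1$ at $\n$ — possible since $C_{3-i}$ has rank $1$ over the domain $R_{3-i}$, so is free of rank $1$ after inverting a single nonzero element, and $\sfk$-algebra maps to a field detect this — gives $(C_1 \otimes_\sfk C_2) \otimes_T (R_i \otimes_\sfk \ell) \cong C_i \otimes_\sfk \ell$, which determines $C_i$ up to isomorphism after the faithfully flat base change $R_i \to R_i \otimes_\sfk \ell$, hence determines $[C_i]$. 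The main obstacle I anticipate is precisely this last step: setting up the localization/specialization cleanly so that the ``extra factor'' $C_{3-i}$ contributes only a free rank-one module and does not interfere, and making sure the faithful flatness descent is valid (one may instead prefer to argue that $C_i \otimes_\sfk \ell$ and $C_i' \otimes_\sfk \ell$ being isomorphic over $R_i \otimes_\sfk \ell$ forces $C_i \cong C_i'$, which follows because $R_i \to R_i \otimes_\sfk \ell$ is faithfully flat and isomorphism of finitely generated modules is detected by faithfully flat descent).
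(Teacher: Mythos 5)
Your handling of the first two assertions is correct and is essentially the route the paper intends: the paper's proof is only a pointer to Altmann's theorem together with Lemma~\ref{fact181224b}, and your explicit K\"unneth computations --- including the verification of the torsion-freeness hypotheses (each relevant module is a $\Hom$ into a torsion-free module over a domain, or is $R_j$ or $0$) and the two further applications of the lemma needed for the relation-respecting claim --- are exactly how that argument is carried out.

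The injectivity argument, however, has a genuine gap at its final step. The principle you invoke, that ``isomorphism of finitely generated modules is detected by faithfully flat descent,'' is false for non-local base change. A concrete instance within Assumption~\ref{ass181224a}: take $D=\mathbb{R}$, $R_1=\mathbb{R}[x,y]/(x^2+y^2-1)$, and $R_2=\mathbb{C}$, so that $T\cong\mathbb{C}[u,u^{-1}]$ via $u=x+iy$. Here $R_1$ is regular with $\Pic(R_1)\cong\mathbb{Z}/2$, generated by the class of the non-principal ideal $L=(x-1,y)$, and regularity forces $\s_0=\Pic$ for both $R_1$ and $T$; since $\Pic(\mathbb{C}[u,u^{-1}])=0$, we get $L\otimes_{\mathbb{R}}\mathbb{C}\cong T$ even though $L\not\cong R_1$. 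So descent of isomorphism classes fails along the faithfully flat map $R_1\to R_1\otimes_{\mathbb{R}}\mathbb{C}$, and indeed $\s_0(g_1)\otimes\s_0(g_2)$ sends the two distinct elements $([R_1],[\mathbb{C}])$ and $([L],[\mathbb{C}])$ to the same class $[T]$. The step therefore cannot be repaired by a cleaner choice of specialization: any correct proof of injectivity needs hypotheses beyond Assumption~\ref{ass181224a} (in the source the proposition cites, the algebras are local with residue field $\sfk$, one reduces modulo the maximal ideal of one factor, and the descent used is descent of isomorphism along a flat \emph{local} homomorphism, which is valid).

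There is also a smaller flaw earlier in that paragraph: a nonempty open subset of $\operatorname{Spec}(R_{3-i})$ on which $C_{3-i}$ and $C_{3-i}'$ are both free of rank one need not contain a maximal ideal when $R_{3-i}$ is not Jacobson. Passing instead to the generic point (tensoring with the fraction field of $R_{3-i}$, over which every semidualizing module is one-dimensional) fixes this, but leads straight back to the descent obstruction above.
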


\begin{proof}
Argue as in~\cite[Theorem~1.2]{altmann:sdmtp} using Lemma~\ref{fact181224b}, and invoke~\cite[Proposition~2.12]{SWSeSpP2}.
\end{proof}

\begin{ex} \label{ex181224c1}
With notation as in Examples~\ref{ex181224c} and~\ref{ex191006a},
Proposition~\ref{prop181225a} yields well-defined, relation-respecting functions
$\s_0(A)\times\s_0(D_t(Y))\to\s_0(A_t(Y))$
and
$\s_0(A)\times\s_0(D[S])\to\s_0(A[S])$, respectively, 
given by 
$([C_1],[C_2])\mapsto [C_1\otimes_D C_2]$.
\end{ex}

We conclude this section by observing that the map in Proposition~\ref{prop181225a} induces a map
$\ol\s_0(R_1)\times\ol\s_0(R_2)\to\ol\s_0(T)$. 

\begin{prop}\label{prop181225a'}
Assume that $T$ is noetherian.
\begin{enumerate}[\rm(a)]
\item\label{prop181225a'1} The map $\s_0(g_1)\otimes\s_0(g_2)\colon\s_0(R_1)\times\s_0(R_2)\to\s_0(T)$
from Proposition~\ref{prop181225a}
induces a  homomorphism $\Pic(g_1)\otimes\Pic(g_2)\colon\Pic(R_1)\times\Pic(R_2)\to\Pic(T)$ with
$$([\fa_1],[\fa_2])\mapsto [\fa_1\otimes_D\fa_2]= [\fa_1\otimes_{R_1}T]+[T\otimes_{R_2}\fa_2]=[\fa_1\otimes_DR_2]+[R_1\otimes_D\fa_2].$$
\item\label{prop181225a'2} The map $\s_0(g_1)\otimes\s_0(g_2)$ is equivariant for the action of
$\Pic(R_1)\times\Pic(R_2)$ on $\s_0(R_1)\times\s_0(R_2)$
and the action of $\Pic(T)$ on $\s_0(T)$.
\item\label{prop181225a'3} The map $\s_0(g_1)\otimes\s_0(g_2)$ also induces a well-defined, relation-respecting map 
$\ol\s_0(g_1)\otimes\ol\s_0(g_2)\colon\ol\s_0(R_1)\times\ol\s_0(R_2)\to\ol\s_0(T)$ with
$(\ec[C_1],\ec[C_2])\mapsto \ec[C_1\otimes_DC_2]$.
\end{enumerate}
\end{prop}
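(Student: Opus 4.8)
The plan is to verify each of the three parts in turn, relying heavily on the machinery already assembled. For part~\eqref{prop181225a'1}, I would first observe that if $\fa_i$ is invertible over $R_i$, then $\fa_i$ is in particular a rank-one projective $R_i$-module, hence a semidualizing $R_i$-module (Remark~\ref{disc191101a}), so $\fa_1\otimes_D\fa_2$ is a semidualizing $T$-module by Proposition~\ref{prop181225a}. To see it is actually invertible, I would produce an explicit inverse: if $\fb_i=\Hom[R_i]{\fa_i}{R_i}$, then $\fa_i\otimes_{R_i}\fb_i\cong R_i$, and using Fact~\ref{fact181224a} together with right-exactness of tensor products one computes
\begin{equation*}
(\fa_1\otimes_D\fa_2)\otimes_T(\fb_1\otimes_D\fb_2)\cong(\fa_1\otimes_{R_1}\fb_1)\otimes_D(\fa_2\otimes_{R_2}\fb_2)\cong R_1\otimes_DR_2=T,
\end{equation*}
so $\fa_1\otimes_D\fa_2$ is invertible. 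The displayed chain of equalities in the statement is then just a rewriting: $\fa_1\otimes_D\fa_2\cong(\fa_1\otimes_{R_1}T)\otimes_T(T\otimes_{R_2}\fa_2)$ by Fact~\ref{fact181224a}, and in the group $\Pic(T)$ this product of classes is the sum $[\fa_1\otimes_{R_1}T]+[T\otimes_{R_2}\fa_2]$; finally $\fa_i\otimes_{R_i}T\cong\fa_i\otimes_DR_2$ (resp.\ $R_1\otimes_D\fa_2$) again by tensor cancellation/associativity. Additivity (that this rule is a group homomorphism) is immediate from the same associativity identities, since $\Pic(g_i)$ is already known to be a homomorphism and tensoring over $D$ is compatible with the group operations on both factors.

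For part~\eqref{prop181225a'2}, equivariance is essentially a formal consequence of associativity of tensor products combined with the fact (Fact~\ref{fact181224a}) describing how the constituent tensor factors recombine. Writing the action on the source as $([\fa_1],[\fa_2])+([C_1],[C_2])=([\fa_1\otimes_{R_1}C_1],[\fa_2\otimes_{R_2}C_2])$ and on the target as $[\fb]+[C]=[\fb\otimes_TC]$, I would check
\begin{equation*}
(\fa_1\otimes_{R_1}C_1)\otimes_D(\fa_2\otimes_{R_2}C_2)\cong(\fa_1\otimes_D\fa_2)\otimes_T(C_1\otimes_DC_2),
\end{equation*}
which follows from reassociating the fourfold tensor product $\fa_1\otimes_{R_1}C_1\otimes_D\fa_2\otimes_{R_2}C_2$ over the relevant rings exactly as in Fact~\ref{fact181224a}. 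One should be mildly careful that the $\Pic(T)$-action and the homomorphism $\Pic(g_1)\otimes\Pic(g_2)$ of part~\eqref{prop181225a'1} are being used consistently, but no new ideas are needed.

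For part~\eqref{prop181225a'3}, the induced map on orbit spaces is a formal payoff: by part~\eqref{prop181225a'2} the map $\s_0(g_1)\otimes\s_0(g_2)$ is equivariant with respect to a group homomorphism $\Pic(R_1)\times\Pic(R_2)\to\Pic(T)$, hence it carries $\Pic(R_1)\times\Pic(R_2)$-orbits into $\Pic(T)$-orbits, giving a well-defined map $\ol\s_0(g_1)\otimes\ol\s_0(g_2)$ on the quotients with the stated formula $(\ec[C_1],\ec[C_2])\mapsto\ec[C_1\otimes_DC_2]$. That this map respects the relation $\tri$ reduces, via Definition~\ref{non-local}, to showing that $\s_0(g_1)\otimes\s_0(g_2)$ itself is relation-respecting on $\s_0(R_1)\times\s_0(R_2)$, which is precisely the content of Proposition~\ref{prop181225a} (it sends $([C_1],[C_2])\tri([M_1],[M_2])$ to $[C_1\otimes_DC_2]\tri[M_1\otimes_DM_2]$).

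I expect the main (and only mild) obstacle to be part~\eqref{prop181225a'1}: one must be slightly attentive in verifying that $\fa_1\otimes_D\fa_2$ genuinely lands in $\Pic(T)$ rather than merely in $\s_0(T)$ — this is where the explicit inverse computation above is needed — and in checking that the three expressions in the displayed equation agree as elements of $\Pic(T)$, i.e.\ keeping track of which tensor products are over $D$, over $R_i$, and over $T$, and invoking Fact~\ref{fact181224a} at each step. Everything else is a routine diagram chase or a direct appeal to earlier results, with no genuine difficulty.
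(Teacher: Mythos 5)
Your proposal is correct and follows essentially the same route as the paper: the key step in both is the explicit inverse computation $(\fb_1\otimes_D\fb_2)\otimes_T(\fa_1\otimes_D\fa_2)\cong(\fb_1\otimes_{R_1}\fa_1)\otimes_D(\fb_2\otimes_{R_2}\fa_2)\cong T$ via Fact~\ref{fact181224a}, with parts~(b) and~(c) handled by the same tensor reassociations and the formal passage to orbit spaces. You simply spell out the details that the paper compresses into ``similar arguments show.''
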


\begin{proof}
Let $\phi = \s_0(g_1)\otimes\s_0(g_2)$ and $\psi=\Pic(g_1)\otimes\Pic(g_2)$.
Note that for invertible $R_i$-modules $\fa_i$, the $T$-module $\Otimes[D]{\fa_1}{\fa_2}$ is invertible.
Indeed, if $[\fb_i]=-[\fa_i]$ in $\Pic(R_i)$ for $i=1,2$, then
$\Otimes[R_i]{\fb_i}{\fa_i}\cong R_i$ for $i=1,2$, and so
Fact~\ref{fact181224a} can be used to show that, as desired,
\begin{align*}
\Otimes[T]{(\Otimes[D]{\fb_1}{\fb_2})}{(\Otimes[D]{\fa_1}{\fa_2})}
&\cong\Otimes[D]{(\Otimes[R_1]{\fb_1}{\fa_1})}{(\Otimes[R_2]{\fb_2}{\fa_2})}
\cong\Otimes[D]{R_1}{R_2}\cong T.
\end{align*}
Thus, $\psi$ is well-defined.
Similar arguments show that $\psi$ is a group homomorphism,  the displayed equalities in part~\eqref{prop181225a'1} hold,
and  $\phi$ is equivariant. 
This establishes parts~\eqref{prop181225a'1} and~\eqref{prop181225a'2} of the result, and
part~\eqref{prop181225a'3}
follows directly.
\end{proof}

%%%%%%%%%%%%%%%%%%%%%%%%%%%%%%%%%%%%%%%%
\section{Divisor Class Groups and Semidualizing Modules}
\label{sec191006c}
%%%%%%%%%%%%%%%%%%%%%%%%%%%%%%%%%%%%%%%%

\begin{assumption}\label{ass181225a}
Continue with Assumption~\ref{ass181224a}.
For the rest of this paper, assume in addition that the rings $R_1$, $R_2$, and $T$ are  normal domains.
\end{assumption}

The point of this section is to describe how divisor classes and semidualizing modules over the rings $R_i$
contribute divisor classes and semidualizing modules over the tensor product $T$. In~\cite{SWSeSpP4},
we specialize to the situation of Example~\ref{ex181224c} and show that these are the only 
divisor classes and semidualizing modules over $T=A_t(Y)$.

\begin{lem}\label{lem181225a}
The rule
%\begin{align*}
$(\Cl(g_1)+\Cl(g_2))([\fa_1],[\fa_2])= [\fa_1\otimes_{R_1}T]+[T\otimes_{R_2}\fa_2]$
%=[\fa_1\otimes_DR_2]+[R_1\otimes_D\fa_2].
%\end{align*}
describes
a well-defined homomorphism $\Cl(g_1)+\Cl(g_2)\colon\Cl(R_1)\times\Cl(R_2)\to\Cl(T)$
such that $(\Cl(g_1)+\Cl(g_2))([\fa_1],[\fa_2])=[\fa_1\otimes_DR_2]+[R_1\otimes_D\fa_2]$.
\end{lem}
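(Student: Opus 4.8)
The plan is to verify that the rule $([\fa_1],[\fa_2])\mapsto[\fa_1\otimes_{R_1}T]+[T\otimes_{R_2}\fa_2]$ is well-defined on divisor classes and additive, and then to compute the alternative formula. First I would observe that for a rank-1 reflexive $R_i$-module $\fa_i$, Remark~\ref{disc181224a} tells us that $\fa_i$ is flat over $D$, so the functors $-\otimes_{R_1}T$ and $T\otimes_{R_2}-$ are restrictions of base change along the flat maps $g_i\colon R_i\to T$ (flatness of $g_i$ follows since $T=R_1\otimes_DR_2$ and $R_2$, resp. $R_1$, is $D$-flat). By Remark~\ref{disc:flat2}, each $\Cl(g_i)\colon\Cl(R_i)\to\Cl(T)$ is then a well-defined group homomorphism, and the rule in question is simply the sum $\Cl(g_1)\circ\mathrm{pr}_1 + \Cl(g_2)\circ\mathrm{pr}_2$ of two group homomorphisms $\Cl(R_1)\times\Cl(R_2)\to\Cl(T)$, hence a group homomorphism.

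For the second formula, I would use tensor cancellation exactly as in Fact~\ref{fact181224a}: since $T=R_1\otimes_DR_2$, one has $\fa_1\otimes_{R_1}T\cong\fa_1\otimes_{R_1}(R_1\otimes_DR_2)\cong\fa_1\otimes_DR_2$ and likewise $T\otimes_{R_2}\fa_2\cong R_1\otimes_D\fa_2$ as $T$-modules. These isomorphisms hold at the level of modules, so they descend to the equality
$$(\Cl(g_1)+\Cl(g_2))([\fa_1],[\fa_2])=[\fa_1\otimes_DR_2]+[R_1\otimes_D\fa_2]$$
in $\Cl(T)$. The only subtlety is that $[\ ]$ in $\Cl(T)$ requires the modules to be rank-1 reflexive over $T$; but $\Cl(g_i)$ is already known (by Remark~\ref{disc:flat2}) to land in $\Cl(T)$, and the displayed isomorphisms identify $\fa_i\otimes_DR_{3-i}$ with the known reflexive module $\fa_i\otimes_{R_i}T$, so no separate reflexivity check is needed.

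I do not anticipate a serious obstacle here: the statement is essentially a bookkeeping assembly of Remark~\ref{disc:flat2} (well-definedness and homomorphism property of base change on class groups) together with the tensor-cancellation isomorphisms of Fact~\ref{fact181224a}. If anything requires care, it is confirming that $g_i\colon R_i\to T$ is flat — which is where Remark~\ref{disc181224a} and Assumption~\ref{ass181224a} (so that $R_{3-i}$ is $D$-torsion-free, hence $D$-flat, whence $T=R_i\otimes_DR_{3-i}$ is $R_i$-flat) enter — and noting that this is the precise setting in which $\Cl(g_i)$ of Remark~\ref{disc:flat2} is available, given that $R_i$ and $T$ are normal domains by Assumption~\ref{ass181225a}. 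Everything else is a direct citation or a one-line cancellation. In short: the proof is ``$\Cl(g_1)+\Cl(g_2)$ is a sum of group homomorphisms obtained from Remark~\ref{disc:flat2}, and Fact~\ref{fact181224a} rewrites its values.''
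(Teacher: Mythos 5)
Your proposal is correct and follows essentially the same route as the paper: flatness of each $g_i$ gives the well-defined homomorphisms $\Cl(g_i)$ via Remark~\ref{disc:flat2}, their sum is the desired map, and the alternative formula is tensor cancellation as in Fact~\ref{fact181224a}. Your added detail on why $g_i$ is flat (torsion-freeness of $R_{3-i}$ over the principal ideal domain $D$) is a point the paper leaves implicit but is entirely in its spirit.
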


\begin{proof}
Since each map $g_i\colon R_i\to T$ is flat, there are well-defined group homomorphisms
$\Cl(g_i)\colon \Cl(R_i)\to\Cl(T)$ given by $[\fa_i]\mapsto[\fa_i\otimes_{R_i}T]$.
Sum these maps to get the desired homomorphism $\Cl(R_1)\times\Cl(R_2)\to\Cl(T)$.
The final equality in
the statement follows from tensor cancelation as in the first display of Fact~\ref{fact181224a}.
\end{proof}

\begin{lem}\label{lem181224a}
Let $[\fa_i]\in\Cl(R_i)$ for $i=1,2$.
If $D$ is a field, or $[\fa_i]\in\s_0(R_i)$ for $i=1,2$,
then $\fa_1\otimes_D \fa_2$ is reflexive over $T$ and
$(\Cl(g_1)+\Cl(g_2))([\fa_1],[\fa_2])= [\fa_1\otimes_D\fa_2]$.
\end{lem}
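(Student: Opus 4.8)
The plan is to reduce the statement to a flatness/localization argument together with the K\"unneth formula of Lemma~\ref{fact181224b}. First, note that by Lemma~\ref{lem181225a} the element $(\Cl(g_1)+\Cl(g_2))([\fa_1],[\fa_2])$ is represented by $\fa_1\otimes_DR_2+R_1\otimes_D\fa_2$, which by tensor cancellation (Fact~\ref{fact181224a}) is isomorphic to $(\fa_1\otimes_{R_1}T)\otimes_T(T\otimes_{R_2}\fa_2)\cong\fa_1\otimes_D\fa_2$ up to taking the reflexive hull. So the content of the claim is really that $\fa_1\otimes_D\fa_2$ is \emph{already} reflexive over $T$, so that no reflexive hull is needed and the class it represents is literally $[\fa_1\otimes_D\fa_2]$.

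To prove reflexivity, I would argue locally: a finitely generated module over a normal domain is reflexive if and only if it is $\mathfrak{p}$-torsion-free for $\mathfrak{p}$ of height $\leq 1$ and satisfies Serre's condition $S_2$; equivalently, $N$ is reflexive iff $N\cong N^{**}$, and this can be checked after localizing at height-one primes plus a depth condition. The cleaner route is to use that each $\fa_i$ is $R_i$-reflexive of rank one, hence (by Remark~\ref{disc181224a}) flat over $D$ since $D$ is a PID and reflexive modules are torsion-free; then $\fa_1\otimes_D\fa_2$ is obtained by the flat base changes $R_i\to T$ applied appropriately: $\fa_1\otimes_D\fa_2\cong(\fa_1\otimes_{R_1}T)\otimes_T(T\otimes_{R_2}\fa_2)$. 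I would first show each $\fa_i\otimes_{R_i}T$ is $T$-reflexive because $g_i$ is flat and sends reflexives to reflexives over the normal domain $T$ (this is the standard fact underlying $\Cl(g_i)$ in Remark~\ref{disc:flat2}), and then show the tensor product over $T$ of these two $T$-reflexive modules is reflexive. That last step is where the hypotheses bite: a tensor product of two reflexive modules need not be reflexive in general, so I need either $D$ a field (so everything is flat and K\"unneth applies with no Ext obstruction) or $[\fa_i]\in\s_0(R_i)$ (so that $\fa_i$ is semidualizing, giving $\Ext[R_i]{\geq1}{\fa_i}{\fa_i}=0$ and, by Lemma~\ref{fact181224b}, control over $\Hom[T]{\fa_1\otimes_D\fa_2}{\fa_1\otimes_D\fa_2}$ and the relevant double-dual map).

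Concretely, for the reflexivity I would invoke Lemma~\ref{fact181224b}: under either hypothesis, the double dual of $N:=\fa_1\otimes_D\fa_2$ decomposes as $N^{**}\cong\Hom[T]{\Hom[T]{N}{T}}{T}$, and writing $\Hom[T]{N}{T}$ and then $\Hom[T]{-}{T}$ of it via the K\"unneth isomorphism (taking $N_i=R_i$, so $\Hom[R_i]{\fa_i}{R_i}=\fa_i^{*}$ and then $\Hom[R_i]{\fa_i^{*}}{R_i}=\fa_i^{**}\cong\fa_i$) yields $N^{**}\cong\fa_1^{**}\otimes_D\fa_2^{**}\cong\fa_1\otimes_D\fa_2=N$, with the canonical biduality map $N\to N^{**}$ identified with the tensor product of the biduality maps of the $\fa_i$, hence an isomorphism. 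This uses that the hypotheses of Lemma~\ref{fact181224b} are met: one of the $M_i=\fa_i$ is torsion-free over $R_i$ (indeed both are, being reflexive), and either $D$ is a field, or — in the semidualizing case — $\Ext[R_i]{n}{\fa_i}{R_i}$ needs to be torsion-free; here I should be a little careful, since what Lemma~\ref{fact181224b}\eqref{fact181224b2} wants is torsion-freeness of $\Ext[R_j]{n}{M_j}{N_j}$ for $N_j=R_j$, and over a normal domain $\Hom[R_i]{\fa_i}{R_i}$ is reflexive hence torsion-free, while the higher Ext modules being zero or torsion-free is exactly the kind of statement that holds for reflexive modules of rank one over normal domains (or can be arranged via the semidualizing hypothesis). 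Finally, once $N$ is known to be $T$-reflexive, the displayed equality $(\Cl(g_1)+\Cl(g_2))([\fa_1],[\fa_2])=[\fa_1\otimes_D\fa_2]$ is immediate from Lemma~\ref{lem181225a} and Fact~\ref{fact181224a}, since the reflexive hull in the definition of the group operation is now redundant.

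The main obstacle I anticipate is verifying the torsion-freeness hypothesis needed to apply Lemma~\ref{fact181224b} in the semidualizing case — i.e., checking that $\Ext[R_j]{n}{\fa_j}{R_j}$ is torsion-free for all $n$, rather than just for $n=0$. I expect this follows because $\fa_j$ is reflexive of rank one over a normal domain, so $\fa_j$ is locally free in codimension $1$, forcing $\Ext[R_j]{n}{\fa_j}{R_j}_{\mathfrak{p}}=0$ for all height-$\leq1$ primes $\mathfrak{p}$ and all $n\geq1$; hence those Ext modules are supported in codimension $\geq2$ and, if nonzero, are certainly not forced to be torsion, but in fact the relevant point is just that $\Ext[R_j]{n}{\fa_j}{R_j}$ has no associated primes of height zero, which is automatic. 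I would spell this out via localization at the zero ideal (the fraction field), where $\fa_j$ becomes free, so every positive Ext vanishes generically; that gives torsion-freeness trivially. Alternatively, and more in the spirit of the paper, I would simply cite the analogous computation from \cite{sather:divisor} or \cite{altmann:sdmtp} that already handles this Ext bookkeeping, and otherwise appeal to Proposition~\ref{prop181225a} when both $\fa_i$ are semidualizing, reducing the reflexivity of $\fa_1\otimes_D\fa_2$ to the reflexivity half of the semidualizing property for $C_1\otimes_DC_2$ established there.
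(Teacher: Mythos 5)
Your overall architecture matches the paper's: the displayed class equality reduces, via Lemma~\ref{lem181225a}, Fact~\ref{fact181224a}, and the definition of addition in $\Cl(T)$, to showing that $\fa_1\otimes_D\fa_2$ is already reflexive (so that the reflexive hull in the group operation is redundant); and in the case where $D$ is a field, your double-dual computation via Lemma~\ref{fact181224b} --- applied once to identify $\Hom[T]{\fa_1\otimes_D\fa_2}{T}$ with $\Hom[R_1]{\fa_1}{R_1}\otimes_D\Hom[R_2]{\fa_2}{R_2}$ and once more for the outer dual --- is exactly the paper's argument.

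The gap is in your primary treatment of the case where $D$ is not a field and $[\fa_i]\in\s_0(R_i)$. There you try to verify hypothesis Lemma~\ref{fact181224b}\eqref{fact181224b2} by arguing that $\Ext[R_j]{n}{\fa_j}{R_j}$ vanishes generically and that this ``gives torsion-freeness trivially.'' This is backwards: a nonzero module over a domain that vanishes at the generic point is a \emph{torsion} module, so the fact that $\fa_j$ is locally free in codimension one only shows that $\Ext[R_j]{\geq1}{\fa_j}{R_j}$ is supported in codimension $\geq 2$, hence torsion if nonzero --- the opposite of what the K\"unneth lemma requires. These Ext modules can genuinely be nonzero (think of $\Ext[R]{i}{\omega}{R}$ for a non-Gorenstein Cohen--Macaulay normal domain): the semidualizing hypothesis gives $\Ext[R_j]{\geq1}{\fa_j}{\fa_j}=0$, not $\Ext[R_j]{\geq1}{\fa_j}{R_j}=0$, and the latter is what your computation needs. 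Fortunately, the fallback in your last sentence is precisely the paper's proof of this case: Proposition~\ref{prop181225a} shows $\fa_1\otimes_D\fa_2$ is semidualizing over $T$, and Lemma~\ref{lem190103a} then gives reflexivity directly, with no K\"unneth computation. Your proof closes once you promote that parenthetical to the actual argument and discard the torsion-freeness verification.
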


\begin{proof}
We first prove that $\fa_1\otimes_D \fa_2$ is reflexive over $T$.

Assume in this paragraph that $D$ is a field. 
For $i=1,2$, the modules $\fa_i$ and $\Hom[R_i]{\fa_i}{R_i}$ are reflexive over $R_i$, hence torsion-free.
Thus, since $T=R_1\otimes_DR_2$, Lemma~\ref{fact181224b} explains the first two steps in the following display.
\begin{align*}
\Hom[T]{\Hom[T]{\fa_1\otimes_D \fa_2}{T}}{T}\hspace{-1in}\\
&\cong
\Hom[T]{\Hom[R_1]{\fa_1}{R_1}\otimes_D\Hom[R_2]{\fa_2}{R_2}}{T}\\
&\cong
\Hom[R_1]{\Hom[R_1]{\fa_1}{R_1}}{R_1}\otimes_D\Hom[R_2]{\Hom[R_2]{\fa_2}{R_2}}{R_2}\\
&\cong
\fa_1\otimes_D\fa_2
\end{align*}
The last step is from the reflexive assumption for each $\fa_i$.
This establishes the reflexivity of $\fa_1\otimes_D \fa_2$ in the first case. 

Next, assume that $[\fa_i]\in\s_0(R_i)$ for $i=1,2$.
Then Proposition~\ref{prop181225a} implies that 
$\fa_1\otimes_D \fa_2$ is semidualizing over $T$.
Thus, $\fa_1\otimes_D \fa_2$ is reflexive over $T$ by Lemma~\ref{lem190103a}
This establishes the reflexivity of $\fa_1\otimes_D \fa_2$ in the second case.

To complete the proof, let $([\fa_1],[\fa_2]) \in \s_0(R_1) \times \s_0(R_2)$.  First using the definition of $\Cl(g_1)+\Cl(g_2)$ from Lemma~\ref{lem181225a} and then the definition of addition in $\Cl(T)$, we have:
\begin{align*}
(\Cl(g_1)+\Cl(g_2))([\fa_1],[\fa_2])
&= [\fa_1\otimes_{R_1}T]+[T\otimes_{R_2}\fa_2]\\
&= [\Hom[T]{\Hom[T]{(\fa_1\otimes_{R_1}T)\otimes_T(T\otimes_{R_2}\fa_2)}{T}}{T}]\\
&= [\Hom[T]{\Hom[T]{\fa_1\otimes_D\fa_2}{T}}{T}]\\
&=[\fa_1\otimes_D\fa_2]
\end{align*}
Note that the third step is from Fact~\ref{fact181224a} and the last step is by reflexivity.
\end{proof}

In~\cite{SWSeSpP4}, in the context of Example~\ref{ex181224c}, we use certain localizations $S_i$ of $T$ 
that guarantee that the top horizontal map in~\eqref{diag181226z} is an isomorphism;
then we show that they also make the maps in the middle triagram into bijections.

\begin{thm}\label{lem181226a}
For $i=1,2$ let $h_i\colon T\to S_i$ be a flat ring homomorphism with $S_i$ a normal domain.
Consider the following  diagram where the vertical maps are the natural inclusions and projections.
\begin{equation}
\label{diag181226z}
\begin{split}
\xymatrix@C=4em{
\Cl(R_1)\times\Cl(R_2)
\ar[rr]^-{\Cl(h_1\circ g_1)\times\Cl(h_2\circ g_2)}
\ar[rd]_>>>>>>>{\Cl(g_1)+\Cl(g_2)\ \ }
&&
\Cl(S_1)\times\Cl(S_2)\\
&\Cl(T)
\ar[ru]_<<<<<<<{\ \ (\Cl(h_1),\Cl(h_2))}
\\
\s_0(R_1)\times\s_0(R_2)\ar@{^(->}[uu]\ar@{->>}[dd]
\ar '[r][rr]^<{\s_0(h_1\circ g_1)\times\s_0(h_2\circ g_2)}
\ar[rd]_>>>>>>>{\s_0(g_1)\otimes\s_0(g_2)\ \ }
&&
\s_0(S_1)\times\s_0(S_2)\ar@{^(->}[uu]\ar@{->>}[dd]\\
&\s_0(T)\ar@{^(->}[uu]\ar@{->>}[dd]
\ar[ru]_<<<<<<<{\ \ \ (\s_0(h_1),\s_0(h_2))}
\\
\ol\s_0(R_1)\times\ol\s_0(R_2)
\ar '[r][rr]^<{\ol\s_0(h_1\circ g_1)\times\ol\s_0(h_2\circ g_2)}
\ar[rd]_-{\ol\s_0(g_1)\otimes\ol\s_0(g_2)\ \ }
&&
\ol\s_0(S_1)\times\ol\s_0(S_2)\\
&\ol\s_0(T)
\ar[ru]_-{\ \ \ (\ol\s_0(h_1),\ol\s_0(h_2))}
}
\end{split}
\end{equation}
%The map $\s_0(g_1)\otimes\s_0(g_2)$ is defined as $([C_1],[C_2])\mapsto[C_1\otimes_DC_2]$.
Then the six quadrilateral faces of diagram~\eqref{diag181226z} commute.
\end{thm}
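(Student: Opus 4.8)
The plan is to verify each of the six quadrilaterals separately, exploiting the fact that the cube is built by stacking three ``horizontal slabs'' (the $\Cl$-level, the $\s_0$-level, and the $\ol\s_0$-level), each of which has the same triangular shape. First I would dispose of the three \emph{bottom} triangular faces --- the two slanted ones meeting at $\Cl(T)$, $\s_0(T)$, $\ol\s_0(T)$ --- and then handle the three \emph{side} faces connecting consecutive slabs.

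For the triangular face at the $\Cl$-level, I would start from the definitions: $\Cl(g_1)+\Cl(g_2)$ sends $([\fa_1],[\fa_2])$ to $[\fa_1\otimes_{R_1}T]+[T\otimes_{R_2}\fa_2]$ by Lemma~\ref{lem181225a}, and then applying $(\Cl(h_1),\Cl(h_2))$ and the definition of each $\Cl(h_j)$ as $-\otimes_T S_j$, one obtains $([\fa_1\otimes_{R_1}S_1]+[S_1\otimes_{R_2}\fa_2'],\dots)$ where tensor cancellation (Fact~\ref{fact181224a}, first display) simplifies each coordinate. On the other side, $\Cl(h_1\circ g_1)\times\Cl(h_2\circ g_2)$ sends $([\fa_1],[\fa_2])$ to $([\fa_1\otimes_{R_1}S_1],[\fa_2\otimes_{R_2}S_2])$, so one must check that $[\fa_1\otimes_{R_1}S_1]=[\fa_1\otimes_{R_1}S_1]+[(\text{something trivial})]$; concretely, $[S_1\otimes_{R_2}\fa_2]=\Cl(h_1)(\Cl(g_1)([\fa_1])) $ contributes, and the point is that $(\Cl(h_1),\Cl(h_2))$ is the map that records the two images in $\Cl(S_1)$ and $\Cl(S_2)$ of an element of $\Cl(T)$ under $\Cl(h_1)$ and $\Cl(h_2)$. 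So really the face commutes because $h_1\circ g_1$ is the composite giving the first $S_1$-coordinate and $h_2\circ g_2$ the second, combined with the identity $\Cl(h_j)\circ(\Cl(g_1)+\Cl(g_2))$ restricted appropriately; I would write this out using functoriality of $\Cl(-)$ for flat maps (Remark~\ref{disc:flat2}) together with the commuting square~\eqref{diag181225a}. The triangular face at the $\s_0$-level is the same computation with $\otimes^{**}$ replaced by plain $\otimes$ in the relevant spots: Proposition~\ref{prop181225a} guarantees $\s_0(g_1)\otimes\s_0(g_2)$ is well-defined and relation-respecting, and Remark~\ref{disc:flat1} handles $\s_0(h_j)$; commutativity again reduces to tensor cancellation via Fact~\ref{fact181224a}. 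The $\ol\s_0$-level face then follows formally by passing to $\Pic$-orbits, using Remark~\ref{disc191101b}\eqref{disc191101b4} to know each map descends.

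For the three side faces, each is a ``ladder'' connecting two slabs via the vertical inclusions/projections. The front-left vertical face involves $\s_0(g_1)\otimes\s_0(g_2)$ sitting above $\ol\s_0(g_1)\otimes\ol\s_0(g_2)$ (and the inclusion/projection maps); here commutativity is exactly the assertion in Proposition~\ref{prop181225a'}\eqref{prop181225a'3} that $\ol\s_0(g_1)\otimes\ol\s_0(g_2)$ is the map induced on orbit spaces by $\s_0(g_1)\otimes\s_0(g_2)$, i.e. that the square $\s_0(R_1)\times\s_0(R_2)\to\s_0(T)$, $\ol\s_0(R_1)\times\ol\s_0(R_2)\to\ol\s_0(T)$ with the projections commutes --- this is immediate from the formula $(\ec[C_1],\ec[C_2])\mapsto\ec[C_1\otimes_DC_2]$. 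The analogous face one slab up (the $\Cl$/$\s_0$ pair) commutes because the inclusions $\s_0(R_i)\subseteq\Cl(R_i)$ and $\s_0(T)\subseteq\Cl(T)$ are compatible with $\Cl(g_1)+\Cl(g_2)$ and $\s_0(g_1)\otimes\s_0(g_2)$: this is precisely the content of Lemma~\ref{lem181224a}, which identifies $(\Cl(g_1)+\Cl(g_2))([\fa_1],[\fa_2])$ with $[\fa_1\otimes_D\fa_2]$ when both $[\fa_i]\in\s_0(R_i)$. The remaining two side faces (on the $S_i$ side of the cube) are of the identical formal type but for the product rings $S_1\times S_2$ in place of $T$; they commute by the same Lemma~\ref{lem181224a}, Remark~\ref{disc:flat1}, and Remark~\ref{disc191101b}\eqref{disc191101b4} reasoning applied coordinatewise, together with functoriality.

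The main obstacle I anticipate is purely bookkeeping: making sure the ``front'' and ``back'' vertical maps in~\eqref{diag181226z} are interpreted consistently (the inclusions $\s_0\hookrightarrow\Cl$ and the projections $\s_0\twoheadrightarrow\ol\s_0$) so that each of the six faces is identified with exactly one of the already-proved statements --- Lemmas~\ref{lem181225a} and~\ref{lem181224a}, Propositions~\ref{prop181225a} and~\ref{prop181225a'}, and Remarks~\ref{disc:flat1}, \ref{disc:flat2}, \ref{disc191101b}\eqref{disc191101b4} --- with no genuinely new computation beyond tensor cancellation from Fact~\ref{fact181224a}. Concretely, I would organize the write-up as: (i) top triangle (cite Lemma~\ref{lem181225a} and functoriality of $\Cl$); (ii) middle triangle (cite Proposition~\ref{prop181225a} and Remark~\ref{disc:flat1}); (iii) bottom triangle (cite Proposition~\ref{prop181225a'}\eqref{prop181225a'3} and Remark~\ref{disc191101b}\eqref{disc191101b4}); (iv) the $\Cl/\s_0$ rectangle (cite Lemma~\ref{lem181224a}); (v)–(vi) the $\s_0/\ol\s_0$ rectangles front and side (cite Proposition~\ref{prop181225a'}\eqref{prop181225a'2}--\eqref{prop181225a'3} and Remark~\ref{disc191101a}). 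In each case the verification is a one- or two-line diagram chase on representatives, so I would not grind through all of them but rather indicate one representative computation in full (say face (iv) via the displayed four-step calculation already appearing in the proof of Lemma~\ref{lem181224a}) and note that the others are formally identical.
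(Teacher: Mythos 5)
There is a genuine gap, and it stems from misidentifying which faces the theorem is about. The six quadrilateral faces of~\eqref{diag181226z} are the three parallelogram/rectangle faces joining the $\Cl$-level to the $\s_0$-level and the three joining the $\s_0$-level to the $\ol\s_0$-level; the three \emph{triangular} faces (one per level) are not among them, and they are not claimed to commute. Your plan counts the three triangles as three of the six faces and sets out to prove they commute, which fails: at the $\Cl$-level, going around through $\Cl(T)$ sends $([\fa_1],[\fa_2])$ to a first coordinate $[\fa_1\otimes_{R_1}S_1]+[S_1\otimes_{R_2}\fa_2]$, whereas the top horizontal map gives $[\fa_1\otimes_{R_1}S_1]$, and the extra term $[S_1\otimes_{R_2}\fa_2]$ need not vanish in $\Cl(S_1)$. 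Your own computation runs into exactly this term (``one must check that $[\fa_1\otimes_{R_1}S_1]=[\fa_1\otimes_{R_1}S_1]+[(\text{something trivial})]$'') and then waves it away; it cannot be resolved in general. Indeed, the remark immediately following the theorem in the paper states that the triagrams are not expected to commute, and that they do so only under the special localizations used in~\cite{SWSeSpP4}. So items (i)--(iii) of your outline are attempts to prove something false (or at least unprovable at this level of generality), and as a consequence your outline omits several of the faces the theorem actually asserts commute.

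The salvageable parts of your proposal are those addressing genuine quadrilateral faces, and there you are aligned with the paper: the left-hand $\Cl$/$\s_0$ parallelogram (the two diagonal maps into $\Cl(T)$ and $\s_0(T)$ together with the inclusions) commutes precisely by Lemma~\ref{lem181224a}, which identifies $(\Cl(g_1)+\Cl(g_2))([\fa_1],[\fa_2])$ with $[\fa_1\otimes_D\fa_2]$ for semidualizing classes; the back rectangle and the right-hand parallelogram at that level commute by functoriality of $\Cl$ and $\s_0$ along flat maps (Remarks~\ref{disc:flat1} and~\ref{disc:flat2}), since each is a product of coordinatewise squares; and the three faces between the $\s_0$- and $\ol\s_0$-levels commute by the very definition of the induced maps on orbit spaces (Remark~\ref{disc191101b}\eqref{disc191101b4} and Proposition~\ref{prop181225a'}\eqref{prop181225a'3}). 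Rewriting your argument so that these six faces --- and only these --- are the ones verified would recover the paper's proof.
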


\begin{proof}
The well-definedness of the map $\s_0(g_1)\otimes\s_0(g_2)$ is given in Proposition~\ref{prop181225a}.
Note that the top rectangular (back) face of diagram~\eqref{diag181226z} is just the product of the following diagrams, each of which commutes
%, as established in Section~\ref{sec190813a}. % (See p.~3.)
by Remarks~\ref{disc:flat1} and \ref{disc:flat2}.
%~\cite[Lemma~3.10(a)]{sather:divisor}.
$$\xymatrix@C=4em{
\Cl(R_1)
\ar[r]^-{\Cl(h_1\circ g_1)}
&\Cl(S_1)
&\Cl(R_2)
\ar[r]^-{\Cl(h_2\circ g_2)}
&\Cl(S_2)
\\
\s_0(R_1)
\ar@{^(->}[u]
\ar[r]^-{\s_0(h_1\circ g_1)}
&\s_0(S_1)
\ar@{^(->}[u]
&\s_0(R_2)
\ar@{^(->}[u]
\ar[r]^-{\s_0(h_2\circ g_2)}
&\s_0(S_2)
\ar@{^(->}[u]
}$$
Thus, the top rectangular (back) face of diagram~\eqref{diag181226z} commutes.
The top right-hand parallelogram commutes similarly, 
and the top left-hand parallelogram commutes by Lemma~\ref{lem181224a}.
The bottom quadrilateral faces commute by definition.
\end{proof}

\begin{disc}
In general, there is no reason to expect the triagrams in~\eqref{diag181226z} to commute. 
However, under the very special circumstances of~\cite{SWSeSpP4}, they do commute.
\end{disc}

%\bibliography{../+new}
\providecommand{\bysame}{\leavevmode\hbox to3em{\hrulefill}\thinspace}
\providecommand{\MR}{\relax\ifhmode\unskip\space\fi MR }
% \MRhref is called by the amsart/book/proc definition of \MR.
\providecommand{\MRhref}[2]{%
  \href{http://www.ams.org/mathscinet-getitem?mr=#1}{#2}
}
\providecommand{\href}[2]{#2}

\end{document}